\definecolor{lightblue}{rgb}{0.63, 0.74, 0.78}
\definecolor{orange}{rgb}{0.85, 0.55, 0.13}
\definecolor{silver}{rgb}{0.69, 0.67, 0.66}
\definecolor{rust}{rgb}{0.72, 0.26, 0.06}
\colorlet{lightsilver}{silver!30!white}
\colorlet{darkorange}{orange!85!black}
\colorlet{darksilver}{silver!85!black}
\colorlet{darklightblue}{lightblue!85!black}
\colorlet{darkrust}{rust!85!black}
\theoremstyle{definition}
\newtheorem{definition}{Definition}
\theoremstyle{plain}
\newtheorem{theorem}{Theorem}
\newtheorem{lemma}[theorem]{Lemma}
\newcommand{\Reals}{\mathbb{R}}
\newcommand{\Herms}{\mathbb{H}}
\newcommand{\Id}{\operatorname{Id}}
\newcommand{\defeq}{\coloneqq}
\newcommand{\C}{\mathcal{C}}
\newcommand{\tC}{\tilde{\mathcal{C}}}
\newcommand{\K}{\mathcal{K}}
\newcommand{\tK}{\tilde{\mathcal{K}}}
\newcommand{\Exp}{\operatorname{Exp}}
\newcommand{\interior}{\operatorname{int}}
\newcommand*{\dr}{\,\mathrm{d}r}
\newcommand*{\diff}{\,\mathrm{d}}
\newcommand{\tf}{\tilde{f}}
\newcommand{\tg}{\tilde{g}}
\newcommand{\tm}{\tilde{m}}
\newcommand{\tn}{\tilde{n}}
\newcommand{\tr}{\operatorname{tr}}
\newcommand{\logdet}{\operatorname{logdet}}
\newcommand{\M}{\mathcal{M}}
\newcommand{\T}{\mathcal{T}}
\newcommand{\diag}{\operatorname{diag}}
\DeclareMathOperator*{\argmin}{argmin}
\DeclarePairedDelimiterX{\infdivx}[2]{(}{)}{%
  #1\;\delimsize\|\;#2%
}
\newcommand{\Div}[1]{\mathbb{D}_{#1}\infdivx}
\title{Competitive Mirror Descent}
\author{%
  Florian Sch{\"a}fer\\
  Caltech \\
  \texttt{schaefer@caltech.edu} 
  \And
  Anima Anandkumar \\
  Caltech \\
  \texttt{anima@caltech.edu}
  \And
  Houman Owhadi \\
  Caltech \\
  \texttt{owhadi@caltech.edu}
}
\begin{document}

\maketitle

\begin{abstract}
  Constrained competitive optimization involves multiple agents trying to minimize conflicting objectives, subject to constraints. This is a highly expressive modeling language that subsumes most of modern machine learning.
  In this work we propose competitive mirror descent (CMD): a general method for solving such problems based on first order information that can be obtained by automatic differentiation.
  First, by adding Lagrange multipliers, we obtain a simplified constraint set with an associated Bregman potential.
  At each iteration, we then solve for the Nash equilibrium of a regularized bilinear approximation of the full problem to obtain a direction of movement of the agents.
  Finally, we obtain the next iterate by following this direction according to the dual geometry induced by the Bregman potential.
  By using the dual geometry we obtain feasible iterates despite only solving a linear system at each iteration, eliminating the need for projection steps while still accounting for the global nonlinear structure of the constraint set.
  As a special case we obtain a novel competitive multiplicative weights algorithm for problems on the positive cone.
\end{abstract}

\section{Introduction}

\paragraph{Constrained competitive optimization:} Machine learning replaces domain-specific modeling by using training data to pick the best performing procedure from a large class of candidates.
This is usually done by solving a large unconstrained optimization problem using first order information obtained from automatic differentiation, which we refer to as \emph{training}.\\
Recently, there has been a surge in attempts at introducing structure into machine learning models in order to improve accuracy, reliability, and robustness.
This structure may come from physical laws \citep{cohen2016group,stewart2017label,raissi2019physics,anderson2019cormorant}, safety requirements or optimality conditions in reinforcement learning \citep{achiam2017constrained,miryoosefi2019reinforcement,bacon2019lagrangian}, or fairness requirements \citep{cotter2018training,cotter2018two,narasimhan2019optimizing}.
Imposing constraints during training provides a flexible and modular framework to incorporate these requirements.\\
Another active area of research is to replace the training problem by a competitive optimization problem where two agents are optimizing their own objective, in competition with each other. 
This approach has been successfully applied to adversarial robustness \citep{madry2017towards} and generative modeling \citep{goodfellow2014generative}.\\
Together, this yields constrained competitive optimization of the general form
\begin{equation}
  \label{eqn:intro-cco}
  \min_{\substack{x \in \C, \\ \tf(x) \in \tC}} f(x, y), \quad \min_{\substack{y \in \K, \\ \tg(y) \in \tK}} g(x, y).
\end{equation}
The goal of this work is to provide a general purpose algorithm for solving constrained competitive problems, as a counterpart of gradient descent in unconstrained single-agent optimization.

\paragraph{Constraints and competition:} 
At a closer look, constraints and competition are closely related. 
In the special case of \emph{equality} constraints we can use a Lagrange multiplier $\lambda$ to turn a constrained minimization problem into an unconstrained minimax problem
\begin{equation*}
  \min \limits_{x:\ g(x) = 0} f(x) \quad \Leftrightarrow \quad \min \limits_{x} \max_{\lambda} f(x) + \lambda g(x).
\end{equation*}
By simultaneously optimizing over $x$ and $\lambda$ we obtain an unconstrained competitive optimization problem that approximates the original constrained problem and, under certain conditions, recovers it.
Thus, any method for competitive optimization can immediately be applied to equality constrained competitive problems.

\paragraph{Competitive gradient descent (CGD):} 
The simplest approach to solving a minmax problem is simultaneous gradient descent (SimGD), where both players perform a step of gradient descent (ascent for the dual player) at each iteration.
However, this algorithm has poor performance in practice and diverges even on simple problems.
The authors of \citep{schafer2019competitive} argue that the poor performance of SimGD comes from the fact that the updates of the two players are computed without interactive nature of the game into account.
They propose to instead compute the updates of both players as the Nash equilibrium of a local bilinear approximation of the original game, regularized with a quadratic penalty that models the limited confidence of both players in the approximation's accuracy.
The resulting algorithm, \emph{competitive gradient descent (CGD)}, is shown to have improved convergence properties, making it a promising candidate for solving constrained competitive optimization problems.

\paragraph{CGD and inequality constraints:} 
While Lagrange multipliers can eliminate equality constraints, they can merely simplify inequality constraints, as
\begin{equation*}
  \min \limits_{x:\ h(x) \leq 0} f(x) \quad \Leftrightarrow \quad \min \limits_{x} \max_{\mu \geq 0} f(x) + \mu h(x).
\end{equation*}
In general, Lagrangian duality can replace a nonlinear conic constraint $h(x) \in \C$ with a linear constraint on the polar cone $\lambda \in \C^{\circ} \defeq \{y : \sup_{x \in \C} x^{\top} y \leq 0 \}$.
Thus, applying CGD to conically constrained optimization requires a version of CGD that can handle linear conic constraints.
The simplest approach to extend CGD to conic constraints is to interleave its updates with projections onto the constraint set, obtaining projected gradient descent (PCGD).
But this algorithm converges to suboptimal points even in convex examples due to a phenomenon that we call \emph{empty threats}.
Empty threats arise when a player can improve its loss in the local approximation by violating the constraints. 
Anticipating this action, which is ultimately prevented by the projection step, the other player deviates from the optimal strategy, preventing PCGD from converging to the optimal solution (c.f. Figure~\ref{fig:empty-threats-intro}). 

\begin{figure}
  \centering
\begin{minipage}{0.32\textwidth}
  \includegraphics[width=1.00\textwidth]{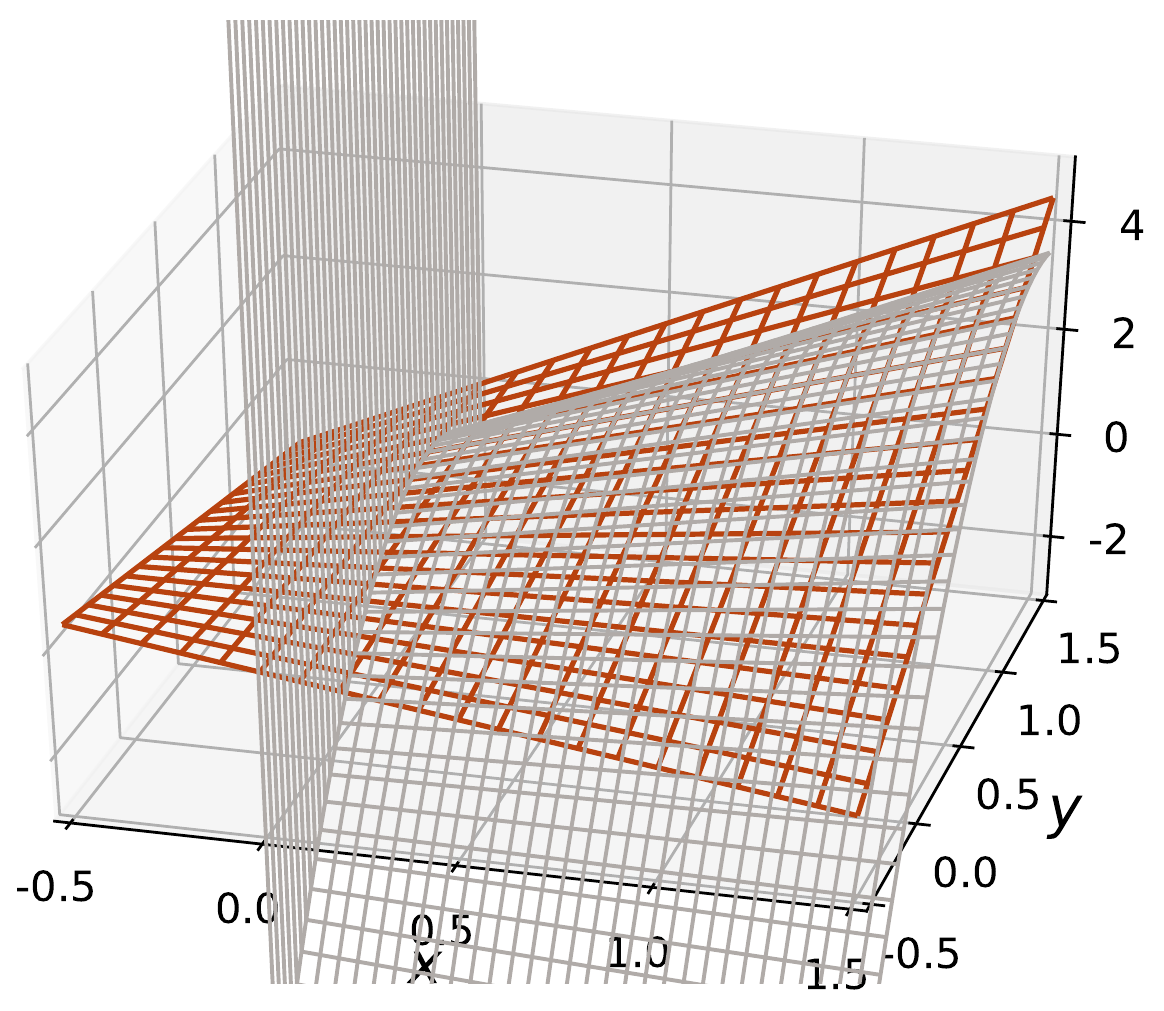}
  \caption{\label{fig:empty-threats-intro} The local approximation ({\color{darkrust} coarse mesh}) of the constrained minimax problem ({\color{darksilver} fine mesh}) is unconstrained.
  Thus, $y$ chooses its move assuming $x$ to turn negative. After $x$ is projected back to the feasible set, this move of $y$ is suboptimal.}
\end{minipage} \hfill
\begin{minipage}{0.32\textwidth}
  \includegraphics[width=1.00\textwidth]{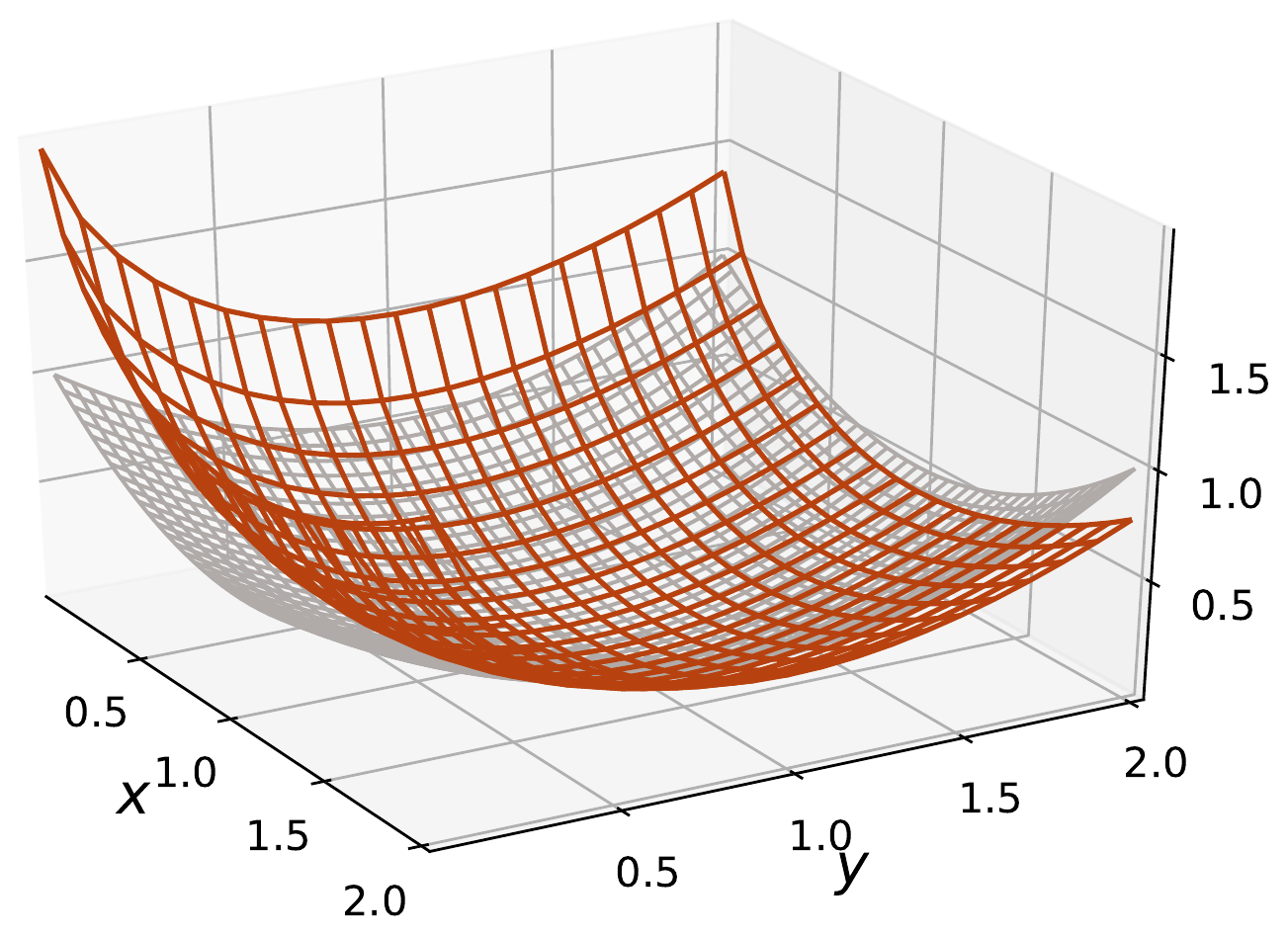}
  \caption{\label{fig:divergences-intro} Squared distance ({\color{darksilver} fine mesh}) and KL divergence ({\color{darkrust} coarse mesh}) to the point $(1,1)$. The KL divergence increases sharply as $(x,y)$ approaches the boundary of the positive orthant.}
\end{minipage} \hfill
\begin{minipage}{0.32\textwidth}
  \includegraphics[width=1.00\textwidth]{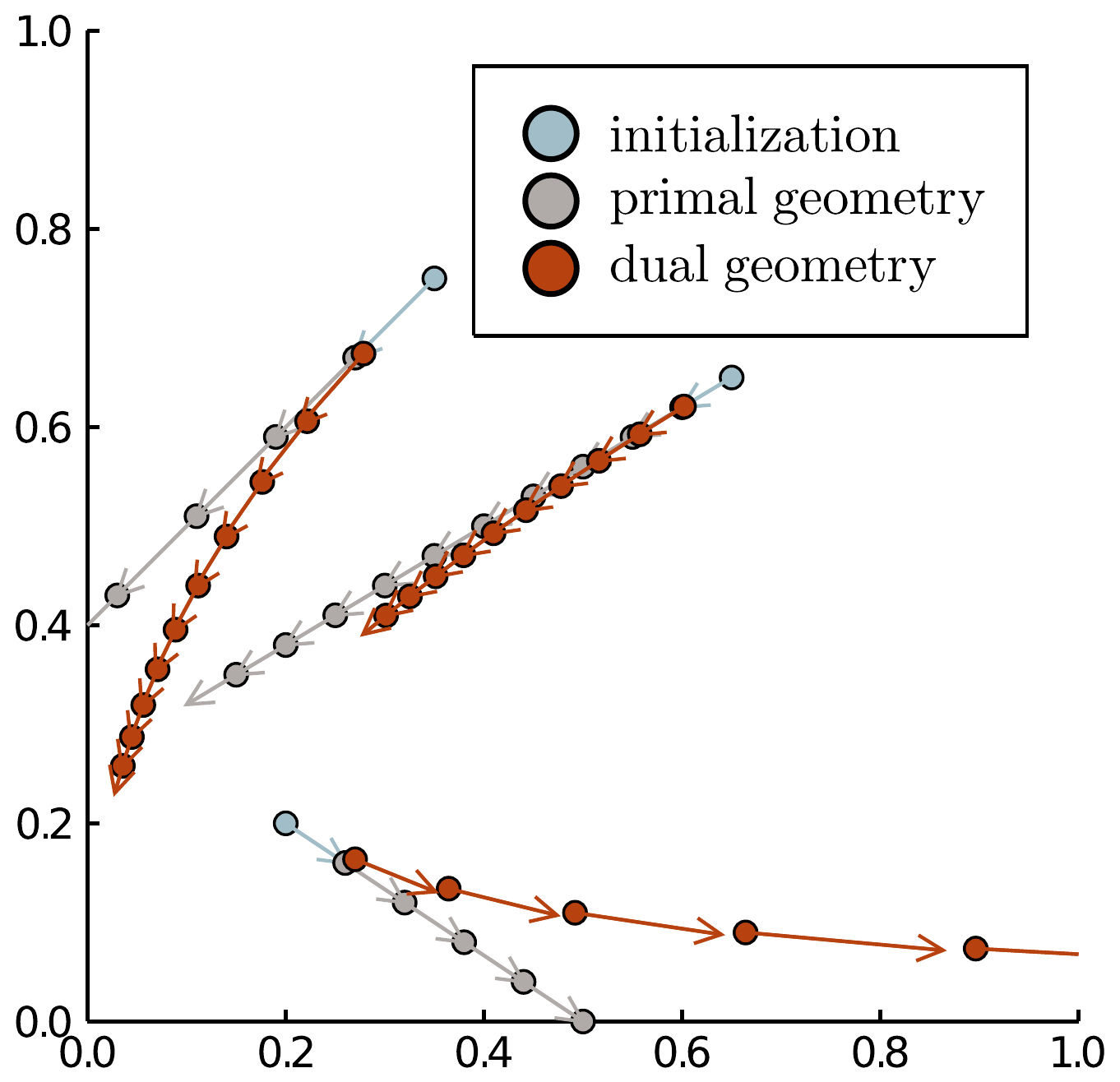}
  \caption{\label{fig:dual-geometry-intro} The dual notion of straight line induced by the Shannon entropy guarantees feasible iterates on $\Reals_+^m$.}
\end{minipage}
\end{figure}

\paragraph{Mirror descent:} 
The fundamental reason for the \emph{empty threats}-phenomenon observed in projected CGD is that the local update rule of CGD does not include any information about the global structure of the constraint set.
The \emph{mirror descent} framework \citep{nemirovsky1983problem} uses a \emph{Bregman potential} $\psi$ to obtain a local update rule that takes the global geometry of the constraint set into account.
It computes the next iterate $x_{k+1}$ as the minimizer of a linear approximation of the objective, regularized with the \emph{Bregman divergence} $\Div{\psi}{x_{k + 1}}{x_{k}}$ associated to $\psi$.
For problems on the positive orthant $\Reals_+^m$, we can ensure feasible iterates by choosing $\psi$ as the Shannon entropy, obtaining an algorithm known as \emph{entropic mirror descent}, \emph{exponentiated gradient descent}, or \emph{multiplicative weights algorithm}.
A naive extension of mirror descent to the competitive setting simply replaces the quadratic regularization in SimGD or CGD with a Bregman divergence.
However, the former inherits the cycling problem of SimGD, while the later requires solving a nonlinear problem \emph{at each iteration}.

\begin{algorithm}[H]
    \begin{algorithmic}[1]
        \FOR{$0 \leq k < N$}
           \STATE $\Delta x = -\left(\diag_{x_{k}} - \left[D_{xy}^2 f\right] \diag_{y_k}^{-1} \left[D_{yx}^2 g\right] \right)^{-1} \left(  \left[\nabla_x f \right] - \left[D_{xy}^2 f \right] \diag_{y_{k}}^{-1} \left[\nabla_y g\right] \right)$
           \STATE $\Delta y = -\left(\diag_{y_{k}} - \left[D_{yx}^2 g\right] \diag_{x_{k}}^{-1} \left[D_{xy}^2 f\right] \right)^{-1} \left( \left[\nabla_x g \right] - \left[D_{yx}^2 g \right] \diag_{x_{k}}^{-1} \left[\nabla_x f\right]  \right)$
           \STATE $x_{k + 1} = x_{k} \exp\left(\Delta x\right)$
           \STATE $y_{k + 1} = y_{k} \exp\left(\Delta y\right)$
  			\ENDFOR
  			\RETURN $x_{N}, y_{N}$
	\end{algorithmic}
	\caption{\label{alg:cmw} Competitive multiplicative weights (CMW)\protect\footnotemark}
\end{algorithm}

\paragraph{Our contributions:}
In this work, we propose competitive mirror descent (CMD), a novel algorithm that combines the ideas of CMD and mirror descent in a computationally efficient way.
In the special case where the Bregman potential is the Shannon entropy, our algorithm computes the Nash-equilibrium of a regularized bilinear approximation and uses it for a \emph{multiplicative} update. 
The resulting \emph{competitive multiplicative weights (CMW, Algorithm~\ref{alg:cmw})} is a competitive extension of the multiplicative weights update rule that accounts for the interaction between the two players

More generally, our method is based on the 
geometric interpretation of mirror descent proposed by \citep{raskutti2015information}.
From this point of view, mirror descent solves a quadratic local problem in order to obtain a \emph{direction} of movement.
The next iterate is then obtained by moving into this direction for a unit time interval.
Crucially, this notion of \emph{moving into a direction} is not derived from the Euclidean structure of the constraint set, but from the dual geometry defined by the Bregman potential.
In the case of the Shannon entropy, this amounts to moving on straight lines in logarithmic coordinates, resulting in multiplicative updates (see Figure~\ref{fig:dual-geometry-intro}).
This formulation is extended to CGD by letting both agents choose a direction of movement according to a local bilinear approximation of the original problem and then using the dual geometry of the Bregman potential to derive the next iterate.
The resulting \emph{competitive mirror descent (CMD, Algorithm~\ref{alg:cmd})} combines the computational efficiency of CGD with the ability to use general Bregman potentials to account for the nonlinear structure of the constraints.

\begin{algorithm}[H]
    \begin{algorithmic}[1]
        \FOR{$0 \leq k < N$}
           \STATE $\Delta x = -\left(\left[D^2\psi\right] - \left[D_{xy}^2 f\right] \left[D^2 \phi \right]^{-1} \left[D_{yx}^2 g\right] \right)^{-1} \left(  \left[\nabla_x f \right] - \left[D_{xy}^2 f \right] \left[D^2 \phi\right]^{-1} \left[\nabla_y g\right] \right)$
           \STATE $\Delta y = -\left(\left[D^2\phi\right] - \left[D_{yx}^2 g\right] \left[D^2 \psi \right]^{-1} \left[D_{xy}^2 f\right] \right)^{-1} \left( \left[\nabla_x g \right] - \left[D_{yx}^2 g \right] \left[D^2 \psi\right]^{-1} \left[\nabla_x f\right]  \right)$
           \STATE $x_{k + 1} = \left(\nabla \psi\right)^{-1}\left(\left[\nabla \psi(x)\right] + \Delta x\right)$
           \STATE $y_{k + 1} = \left(\nabla \phi\right)^{-1}\left(\left[\nabla \phi(y)\right] + \Delta y\right)$
  			\ENDFOR
  			\RETURN $x_{N}, y_{N}$
	\end{algorithmic}
	\caption{\label{alg:cmd} Competitive mirror descent (CMD) for general Bregman potentials $\psi$ and $\phi$.}
\end{algorithm}


\section{Simplifying constraints by duality}

\footnotetext{Here, $\exp$ is applied element-wise and $\diag_{z}$ denotes the diagonal matrix with entries given by the vector $z$. $\nabla_x f, [D_{xy}^2 f], \nabla_y g, [D_{yx}^2 g]$ denote the gradients and mixed Hessians of $f$ and $g$, evaluated in $(x_{k}, y_{k})$.}
\paragraph{Constrained competitive optimization:} 
The most general class of problems that we are concerned with is of the form of Equation~\eqref{eqn:intro-cco}
where $\C \subset \Reals^m, \K \in \Reals^n$ are convex sets, $\tf: \C \longrightarrow \Reals^{\tn}$ and $\tg: \K \longrightarrow \Reals^{\tm}$ are continuous and piecewise differentiable multivariate functions of the two agents' decision variables $x$ and $y$ and $\tC, \tK$ are closed convex cones.
This framework is extremely general and by choosing suitable functions $\tf, \tg$ and convex cones $\tC, \tK$ it can implement a variety of nonlinear equality, inequality, and positive-definiteness constraints.
While there are many ways in which a problem can be cast into the above form, we are interested in the case where the $f, \tf, g, \tg$ are allowed to be \emph{complicated}, for instance given in terms of neural networks, while the $\tC, \tK$ are simple and well-understood.
For convex constraints and objectives $f$ and $g$ the canonical solution concept is a \emph{Nash equilibrium}, a pair of feasible strategies $\left(\bar{x}, \bar{y}\right)$ such that $\bar{x}$ ($\bar{y}$) is the optimal strategy for $x$ ($y$) given $y = \bar{y}$ ($x = \bar{x}$).
In the non-convex case it is less clear what should constitute a solution and it has been argued \citep{schafer2019implicit} that meaningful solutions need not even be local Nash equilibria.

\paragraph{Lagrange multipliers lead to linear constraints:}
Using the classical technique of Lagrangian duality, the complicated parameterization $f, \tf, g, \tg$ and the simple constraints given by the $\tC, \tK$ can be further decoupled.
The polar of a convex cone $\K$ is defined as $\K^{\circ} \defeq \left\{y : \sup_{x \in \K} x^{\top} y \leq 0 \right\}$.
Using this definition, we can rewrite Problem~\eqref{eqn:intro-cco} as 
\begin{equation}
  \label{eqn:constraint-as-max}
  \min_{\substack{x \in \C, \\ \mu \in \tK^{\circ}}} f(x, y) + \max_{\nu \in \tC^{\circ}} \nu^{\top} \tf(x), 
  \quad \min_{\substack{y \in \K, \\ \nu \in \tC^{\circ}}} g(x, y) + \max_{\mu \in \tK^{\circ}} \mu^{\top} g(y).
\end{equation}
Here we used the fact that the maxima are infinity if any constraint is violated and zero, otherwise.

\paragraph{Watchmen watching watchmen:}
We can now attempt to simplify the problem by making $\mu_j$ ($\nu_i$) decision variables of the $y$ ($x$) player and adding a zero sum objective to the game that incentivizes both players to enforce each other's compliance with the constraints, resulting in 
\begin{equation}
  \label{eqn:constrainedCompLagrange}
  \min_{\substack{x \in \C, \\ \mu \in \tK^{\circ}}} f(x, y) + \nu^{\top} \tf(x)  -  \mu^{\top} \tg(y), 
  \quad \min_{\substack{y \in \K, \\ \nu \in \C^{\circ}}} g(x, y) + \mu^{\top} \tg(y)  - \nu^{\top} \tf(x).
\end{equation}
If Problem~\ref{eqn:intro-cco} is convex and strictly feasible (Slater's condition), its Nash equilibria are equal to those of Problem~\ref{eqn:constrainedCompLagrange} (see supplementary material for details).
\paragraph{A simplified problem:} 
While this is not true in general, we propose to use Problem~\ref{eqn:constrainedCompLagrange} as a more tractable approximation of Problem~\ref{eqn:intro-cco}.
In the following, we assume that the nonlinear constraints of the problem have already been eliminated, leaving us (for possible different $\C$ and $\K$) with 
\begin{equation}
  \label{eqn:problem-reduced}
  \min \limits_{x \in \C} f(x, y), \quad \min \limits_{y \in \K} g(x, y).
\end{equation}

\section{Projected competitive gradient descent suffers from empty threats}
\label{sec:empty-threats}

\paragraph{Competitive gradient descent (CGD):} 
\citep{schafer2019competitive} propose to solve unconstrained competitive optimization problems by choosing iterates $(x_{k + 1}, y_{k + 1})$ as Nash equilibria of a quadratically regularized bilinear approximation
\begin{equation} 
  \label{eqn:CGD-localgame}
  \begin{split}
    x_{k + 1} = x_{k} + &\argmin_{x \in \Reals^m} \left[D_x f\right] x + y^{\top} \left[D_{yx } f\right] x + \left[D_y f\right] y + \frac{x^\top x}{2 \eta} \\
    y_{k + 1} = y_{k} + &\argmin_{y \in \Reals^n} \left[D_x g\right] x + x^{\top} \left[D_{xy} g\right] y + \left[D_y g\right] y + \frac{y^\top y}{2 \eta},
  \end{split}
\end{equation}
where $f,g:\Reals^m \times \Reals^n \longrightarrow \Reals$ are the loss functions of the two agents and $\left[D_x f\right]$, $\left[D_x g\right]$, $\left[D_y f\right]$, $\left[D_y g\right]$, $\left[D_{yx} f\right]$, and $\left[D_{xy} g\right]$ their (mixed) derivatives evaluated in the last iterate $(x_{k}, y_{k})$.

\paragraph{A simple minmax game:} We will use the following simple example to illustrate the difficulties when dealing with inequality constraints.
\begin{equation*}
  \label{eqn:example-unconstrained}
  \min_{x \in \Reals} 2 xy - (1 - y)^2 \quad \min_{y \in \Reals} - 2 xy + (1 - y)^2.
\end{equation*}
When applying CGD to this example, it finds the (unique) Nash-equilibrium $(x, y) = (-1, 0)$.
\paragraph{Adding positivity constraints:} Let us now assume that $x$ and $y$ are constrained to be positive
\begin{equation}
  \label{eqn:example-constrained}
  \min_{x \in \Reals_+} 2 xy - (1 - y)^2 \quad \min_{y \in \Reals+} - 2 xy + (1 - y)^2.
\end{equation}
\begin{wrapfigure}{}{4.5cm}
  \vspace{-5pt}
  \begin{tikzpicture}
    \input{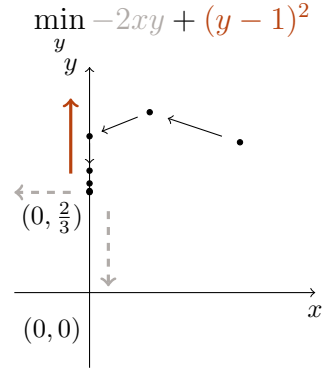}
  \end{tikzpicture}
  \caption{\label{fig:pcgd_failure} Since $x \geq 0$, the bilinear term should only lead to $y$ picking a larger value. But CGD is oblivious to the constraint and $y$ decreases in anticipation of $x$ turning negative.}
  \vspace{-10pt}
\end{wrapfigure}
It is easy to see that for any $y \geq 0$ the best strategy of the $x$-player is $x = 0$. 
Given $x = 0$ the best strategy for the $y$-player is $y = 1$, leaving us with the Nash-equilibrium $(x,y) = (0, 1)$.
How do we incorporate the positivity constraints into CGD?
The simplest approach is to combine CGD with projections onto the constraint set, which we call \emph{projected competitive gradient descent (PCGD)}. 
Here, we compute the updates of CGD as per usual but after each update we project back onto the constraint set through $(x,y) \mapsto (\max(0, x), \max(0,y))$.
This generalizes projected gradient descent, a popular and simple method in constrained optimization with proven convergence properties \citep{iusem2003convergence}.

\paragraph{PCGD and empty threats:} Applying PCGD (with $\eta =0.25$) to our example, we see that it converges instead to $(x, y) = (0, 2/3)$, which is suboptimal for the $y$-player!
Since problem~\eqref{eqn:example-constrained} is a convex game, any sensible algorithm should converge to the unique Nash-equilibrium, making this a failure case for PCGD.\\
The explanation of this behavior lies in the conflicting rules of the unconstrained local game~\ref{eqn:CGD-localgame} and the constrained global game~\ref{eqn:example-constrained}.
The local game of Equation~\eqref{eqn:CGD-localgame} in the point $(0,2/3)$ is given by 
\begin{equation*}
  \min_{x} \frac{4x}{3} + 2 x y + \frac{2y}{3} + 2 x^2, \quad \min_{y} -\frac{4x}{3} - 2 x y - \frac{2y}{3} + 2 y^2
\end{equation*}
If $x$ were to stay in zero, the optimal strategy for $y$ under the local game would be $y = 1/6$, moving it towards $1$, the global optimum.
However, the local game does not have constraints, incentivizing $x$ to turn negative.
This is an \emph{empty threat} in that it violates the constraints and will therefore be undone by the projection step.
However, this information is not included in the local game and the outlook of $x$ becoming negative deters $y$ from moving towards the optimal strategy of $y=1$ (c.f. Figure~\ref{fig:pcgd_failure}).
We note that this problem is generic for projected versions of algorithms featuring \emph{competitive terms} involving the mixed Hessian.
Since \citep{schafer2019competitive} identify these terms as crucial for convergence, this is a major obstacle for constrained competitive optimization.

\section{Mirror descent and Bregman potentials}
\paragraph{Bregman potentials:} The underlying reason for the \emph{empty threats-phenomenon} described in the last section is that a local method such as CGD has no way on knowing how close it is to the boundary of the constraint set.
In single-agent optimization this problem has been addressed by the use of \emph{Bregman potentials} or \emph{Barrier functions}.

\begin{definition}
  We call a strictly convex and two-times differentiable function $\psi:\C \longrightarrow \Reals $ a \emph{Bregman potential} on the convex domain $\C$.
\end{definition}

\begin{table}
  \centering
  \begin{tabular}{c|c|c|c|c}
    \rowcolor{lightsilver}
    $\C$ & Potential $\psi(p)$ & $\left[D\psi(p)\right] x $ & $x^{\top} \left[D^2\psi(p)\right] x $ & Divergence $\Div{\psi}{p}{q}$\\
    \hline 
    \hline 
    $\Reals^m$ & $\frac{p^{\top} A \ p}{2}$, $A \in \Herms_+^{m \times m}$ & $p^{\top}A x $ & $x^{\top} A x$ & $\frac{\left(p - q\right)^{\top} A \ \left(p - q\right)}{2}$ \\
    \hline
    \rowcolor{lightsilver}
    $\Reals_+^m$ & $\sum \limits_{i} p_i \log(p_i) - p_i$ & $ \sum \limits_{i} \log(p_i)  x_i$ & $\sum \limits_{i} \frac{x_{i}^2}{p_{i}}$  & $\sum \limits_{i} \left(p_i \log(\frac{p_i}{q_i}) - p_i + q_i \right)$   \\
    \hline 
    $\Reals_+^m$ & $\sum \limits_i -\log(p_i)$ & $ - \sum \limits_{i} \frac{x_{i}}{p_{i}}$ & $\sum_{i} \frac{x_{i}^2}{p_{i}^2}$ & $\sum \limits_i \left( \frac{p_i}{q_i} - \log\left(\frac{p_i}{q_i}\right) - 1 \right)$   \\
    \hline
    \rowcolor{lightsilver}
    $\Herms_+^m$ & $ -\logdet(p)$ & $\tr\left[p^{-1} x \right]$  & $\tr\left[xp^{-1} p^{-1}x\right] $ & $ \tr\left[p / q - \Id \right]  - \logdet(p/q)$ \\
    \hline
  \end{tabular}
  \caption{\label{tab:bregman} Some Bregman potentials, their domains, derivatives, and Bregman divergences.}
  \vspace{-15pt}
\end{table}
In Figure~\ref{tab:bregman} we have shown some popular Bregman potentials and their respective domains.
For the purposes of this work, a Bregman potential $\psi$ on $\C$ is best understood as quantifying how close different points $y \in \C$ are from $\argmin \psi$, which we think of as the center of $\C$.
Importantly, the notion of distance induced by $\psi$ is anisotropic, and usually increases rapidly as $y$ approaches the boundary of the domain.
In particular, derivative information of $\psi$ at a point $p$ allows us to infer its position relative to the boundary of $\C$, allowing a local algorithm to take into account the global structure of the constraint set.

\paragraph{Bregman divergences:} 
Associated with a Bregman potential $\psi(\cdot)$ on a domain $\C$ is the Bregman divergence $\Div{\psi}{\cdot}{\cdot}$ that allows us to extend the notion of distance between $y$ and $\argmin \psi$ given by $\psi$ to a notion of distance between arbitrary elements of $\C$.

\begin{definition}
  The Bregman divergence associated to the potential $\psi$ is defined as 
  \begin{equation}
    \Div{\psi}{p}{q} \defeq \psi(p) - \psi(q) - [\nabla \psi(q)] (p - q).
  \end{equation}
\end{definition}

Just like a squared distance, $\Div{\psi}{p}{q}$ is convex, positive, and achieves its minimum of zero if and only if $p = q$. 
However, it is not symmetric in the sense that $\Div{\psi}{p}{q} \neq \Div{\psi}{q}{p}$, in general.
Crucially, a Bregman divergence will in general not be translation invariant and can therefore take the position relative to the boundary of $\C$ into account.

\paragraph{Mirror Descent:}
\emph{Mirror descent} \citep{nemirovsky1983problem} uses a Bregman to improve gradient descent by the following update rule.
\begin{equation}
    \label{eqn:update-md}
    x_{k + 1} = \argmin_{x} \left[ D f\left(x_{k}\right)\right]\left(x - x_{k}\right) + \Div{\psi}{x}{x_{k}}. 
\end{equation}
By solving for the first order optimality conditions, the update can be computed in closed form as  
\begin{equation}
    x_{k + 1} = \left(D\psi\right)^{-1} \left( \left[D\psi\left(x_{k}\right)\right]   - \left[Df\left(x_{k}\right)\right]  \right),
\end{equation}
where $\left(D \psi\right)^{-1}(y)$ is defined as the point $x \in \C$ such that $\left[D \psi(x)\right] = y$.
By strict convexity of $\psi$, if $\left(D \psi\right)^{-1}(y)$ exists it is unique. 
In this case, $\left(D \psi\right)^{-1}(y)$ is contained in $\C$ which ensures that mirror descent satisfies the constraints without an additional projection step.
\begin{definition}
    A Bregman potential $\psi$ is \emph{complete} on $\C$ if the map $D\psi :\C \longrightarrow \Reals^{1 \times m}$ is surjective.
\end{definition}
If $\psi$ is complete, the mirror descent update is well-defined for all gradients. 
From this point of view, the necessity for a projection step in inequality constrained gradient descent arises from the potential $\psi(x) = x^{\top}x / 2$ not being complete on the constraint set $\C$.
A popular choice of potential that is complete on the positive orthant $\Reals_+^m$ is the Shannon entropy $\psi(x) \sum_i x_i \log\left(x_i\right) - x_i$ resulting in \emph{entropic mirror descent}
\begin{equation}
    x_{k + 1} = \exp\left( \log\left(x_{k}\right) - \left[Df\left(x_{k}\right)\right]^{\top}  \right) = x_{k} \exp\left(\left[Df\left(x_{k}\right)\right]^{\top}\right).
\end{equation}
Here, multiplication, exponentiation, and logarithms are defined component-wise.

\paragraph{Naive competitive mirror descent:}
A possible generalization of mirror descent to Problem~\eqref{eqn:problem-reduced} is
\begin{equation*} 
  \label{eqn:NCGD-localgame}
  \begin{split}
    x_{k + 1} = &\argmin_{x \in \Reals^m} \left[D_x f\right] \left(x - x_{k}\right) + \left(y - y_{k}\right)^{\top} \left[D_{yx } f\right] \left(x - x_{k}\right) + \left[D_y f\right] \left(y - y_{k}\right) + \Div{\psi}{x}{x_{k}} \\
    y_{k + 1} = &\argmin_{y \in \Reals^n} \left[D_x g\right] \left(x - x_{k}\right) + \left(x - x_{k}\right)^{\top} \left[D_{xy} g\right] \left(y - y_{k}\right) + \left[D_y g\right] \left(y - y_{k}\right) + \Div{\phi}{y}{y_{k}},
  \end{split}
\end{equation*}
where $\psi$ and $\phi$ are complete Bregman potentials on $\C$ and $\K$, respectively. 
However, the local game in this update rule does not have a closed form solution as in mirror descent or competitive gradient descent.
Instead, it requires us to solve a nonlinear competitive optimization problem at each step.
While it is possible to alternatingly solve for the two players' strategies until convergence, this will be significantly slower than the Krylov subspace methods employed to solve system of linear equations in the original CGD \citep{schafer2019competitive}.
In order to avoid this overhead, we will now develop an alternative competitive generalization of mirror descent rooted in its information-geometric interpretation.

\section{The information geometry of Bregman divergences}

\paragraph{Reminder on differential geometry:}
To present the material in this chapter, we briefly review the following basic notions of differential geometry. 
\begin{definition}[Submanifold of $\Reals^m$]
  {$\M \subset \Reals^m$ is a $k$-dimensional smooth submanifold of $\Reals^m$ if for every point $p \in \mathcal{M}$ there exists an open ball $B(p)$ centered at $p$ and a smooth function $G_p:B(p) \longrightarrow \Reals^{m-k}$, such that the rank of the Jacobian of $G_p$ is $k$ everywhere and $\mathcal{M} \cap B_p = G^{-1}(0)$.
  \label{def:submanifold}}
\end{definition}
We can think of a smooth submanifold as a (possible curved) surface in $\Reals^m$.
\begin{definition}[Tangent space]
  {The tangent space $\mathcal{T}_{p}\mathcal{M}$ of a $k$-dimensional submanifold $\mathcal{M}$ of $\Reals^m$ in $p$ is given by the null space of the Jacobian of $G_p$ in $p$. Here, $G_p$ is chosen as in Definition~\ref{def:submanifold}.}
\end{definition}
To a creature living on $\M$, the elements of the tangent space in $p \in \M$ correspond to the velocities with which it could depart from $p$.
\begin{definition}[Riemannian metric]
  A Riemannian metric is a map $p \mapsto g_p(\cdot, \cdot)$ that assigns to each $p \in \mathcal{M}$ an inner product on $\mathcal{T}_p \mathcal{M}$.
\end{definition}
If the elements $x \in \T_p \M$ of the tangent space are velocities, $g_{p}\left(x, x\right)$ is their (squared) speed. 
It allows us to compare the magnitude or significance of moving in different velocities.
\begin{definition}[Exponential map]
  An \emph{exponential map} is a collection $\left\{\Exp_p: \mathcal{T}_p \mathcal{M} \longrightarrow \mathcal{M} \right\}_{p \in \mathcal{M}}$ that satisfies 
  \begin{equation*}
    \label{eqn:exponential-property}
    \Exp_p\left(\left( t + s \right) x\right) = \Exp_q\left(s y\right),  \quad \mathrm{for}\ q = \Exp_p(tx), \quad y = \frac{\diff}{\dr} \left.\Exp_{p}\left(r x \right))\right|_{r = t}.
  \end{equation*}
\end{definition}
\begin{wrapfigure}{}{0.40\textwidth}
  \vspace{-10pt}
  \centering
  \includegraphics[width=0.40\textwidth]{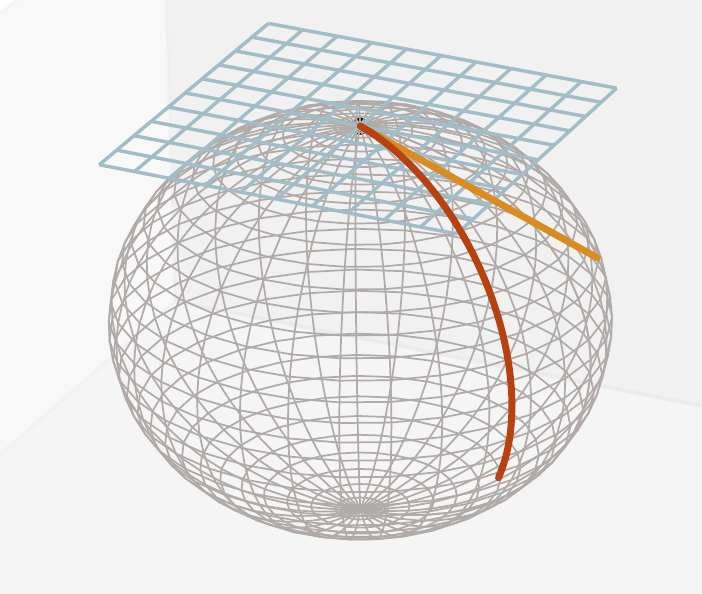}
  \caption{\label{fig:manifold}  {\color{darksilver} A manifold $\M$}, {\color{darklightblue} tangent space $\T_p \M$}, {\color{darkorange} tangent vector $x \in \T_p \M$}, and the path described by the {\color{darkrust} exponential map $\left\{q: q = \Exp_p(t x), \mathrm{for}\ t \in [0,1]\right\}$}.}
  \vspace{-20pt}
\end{wrapfigure}
The exponential map $\Exp_p(x)$ returns the destination reached when \emph{walking straight} in the velocity $x$ for a unit time interval, starting in $p$.

\paragraph{The geometry of Bregman potentials:}
We will now explain how a Bregman potential equips its domain $\C$ with a geometric structure (see \citep{amari2007methods,amari2016information} for details).
Our manifold $\M$ will simply be the interior of $\C \subset \Reals^{m}$ with $\T_p \M$ given by $\Reals^m$ for all $p \in \M$ (without loss of generality, we assume that 
$\interior \C$ has full dimension). 
The Riemannian metric $g^{\psi}$ associated with the potential $\psi$ is given by 
\begin{equation*}
  g_p^{\psi}\left(x, x\right) = \frac{x^{\top} \left[D^2_{xx} \psi \left( p \right)\right] x}{2} = \frac{\diff^2}{2\dr^2} \Div{\psi}{p + rx}{p}.
\end{equation*}
Following the interpretation of the Bregman divergence as a notion of squared distance, the metric $g_{p}^\psi$ measures how quickly a given velocity $x \in \T_p\M$ will take us away from $p$, according to this notion of distance.

\paragraph{The dual exponential map:}
A key feature of Bregman potentials is that they induce a new exponential map on the manifold $\C$. 
$\M$ is an open subset of $\Reals^m$ and thus a restriction of the Euclidean exponential map $\Exp_{p}\left(x\right) \defeq p + x$ to $p \in \M$ is an exponential map on $\mathcal{M}$, which we call the \emph{primal} exponential map.
However, a Bregman potential $\psi$ also induces a \emph{dual} exponential map on $\M$, given by 
\begin{equation*}
  \Exp_p^{\psi}\left(x\right) = \left(D\psi\right)\left(\left(D\psi\right)^{-1} + \left[D^2 \psi\left(p\right)\right] x \right)
\end{equation*}
In the special case of $\C = \Reals_+^m$ and $\psi(x) = \sum_i x_i \log(x_i) - x_i$ given by the Shannon entropy, the dual exponential map is given by
\begin{equation*}
  \left(\Exp_p^{\psi}\left(x\right)\right)_{i} = \exp\left( \log(p_i) + \frac{x_i}{p_i}\right) = p_i \exp\left(\frac{x_i}{p_i}\right).
\end{equation*}
Thus, while the \emph{primal straight lines} $t \mapsto \Exp_{x}\left( t x\right)$ have constant additive rate of change, \emph{dual straight lines} have constant $t \mapsto \Exp_{x}\left( t x\right)$ have constant relative rate of change.

An important property of the dual exponential map it that it inherits the completeness property of $\psi$.
\begin{lemma}
  If the potential $\psi$ is complete with respect to $\M$, the dual exponential map $\Exp^{\psi}$ is complete in the sense that 
  \begin{equation}
    \forall p \in \M, \forall x \in \Reals^m: \Exp_{p}^{\psi}\left(x\right) \in \M.
  \end{equation}
\end{lemma}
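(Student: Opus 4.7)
The plan is to unpack the definition of $\Exp_p^{\psi}(x)$ and show that each ingredient in its construction lands where it should, using only the hypothesis that $D\psi : \M \longrightarrow \Reals^{1 \times m}$ is surjective together with the strict convexity of $\psi$ (which is part of the definition of a Bregman potential).

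First, I would observe that strict convexity of $\psi$ forces $D\psi$ to be injective on $\M$: if $D\psi(p) = D\psi(q)$, then by the mean value theorem along the segment from $p$ to $q$ (which lies in $\M$ by convexity of $\C$, at least after perturbing into the interior) we would get $D^2\psi$ vanishing in some direction, contradicting strict convexity. Combined with the completeness hypothesis, $D\psi$ is therefore a bijection $\M \longrightarrow \Reals^{1 \times m}$, so $(D\psi)^{-1}$ is well-defined on all of $\Reals^{1 \times m}$ and takes values in $\M$.

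Next, I would look at the argument passed to $(D\psi)^{-1}$ in the definition of $\Exp_p^{\psi}(x)$, namely $\left[D\psi(p)\right] + \left[D^2\psi(p)\right] x$. For any $p \in \M$ the first summand lies in $\Reals^{1 \times m}$ by construction, and the second summand is the image of $x \in \Reals^m$ under the linear map $\left[D^2\psi(p)\right]$, so it too is an element of $\Reals^{1 \times m}$. Hence the sum lies in $\Reals^{1 \times m}$, the domain on which $(D\psi)^{-1}$ is defined, and applying $(D\psi)^{-1}$ produces a point of $\M$. This is exactly the conclusion $\Exp_p^{\psi}(x) \in \M$.

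The only substantive step is the first one, establishing that completeness plus strict convexity gives a genuine bijection and hence a globally defined inverse of $D\psi$; everything after that is formal. No delicate estimates or boundary analysis are needed, because the whole point of working with the dual exponential map is that feasibility is automatic once $(D\psi)^{-1}$ is available on all of $\Reals^{1 \times m}$.
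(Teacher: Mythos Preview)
The paper does not supply a separate proof of this lemma; it is stated as immediate from the preceding discussion, where the authors have already observed that ``by strict convexity of $\psi$, if $(D\psi)^{-1}(y)$ exists it is unique'' and that ``in this case, $(D\psi)^{-1}(y)$ is contained in $\C$''. Together with the definition of completeness (surjectivity of $D\psi$), this makes $(D\psi)^{-1}$ a globally defined map $\Reals^{1\times m}\to\M$, and the lemma follows by applying it to $[D\psi(p)]+[D^2\psi(p)]x$. Your proposal is exactly this argument, so it matches the paper's (implicit) reasoning.

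One small remark on your injectivity step: arguing via the mean value theorem that $D^2\psi$ would have to vanish in some direction is not quite airtight, because strict convexity alone does not force $D^2\psi$ to be positive definite everywhere (think of $t\mapsto t^4$ at the origin). The cleaner route, which the paper in effect uses, is the direct first-order argument: if $D\psi(p)=D\psi(q)$ with $p\neq q$, then adding the two strict supporting inequalities $\psi(q)>\psi(p)+D\psi(p)(q-p)$ and $\psi(p)>\psi(q)+D\psi(q)(p-q)$ yields a contradiction. This is a cosmetic fix; your overall structure is correct.
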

Thus, a complete potential $\psi$ provides us with way of following a direction $x \in \T_{p}\M$ in $\M = \interior \C$ that ensures that we never accidentally leave the feasible set.
We will now recast mirror descent in terms of the dual geometry induced by $\psi$.

\section{Competitive mirror descent}
In \citep{raskutti2015information}, it is observed that mirror descent has a natural formulation in terms of the dual geometry induced by $\psi$. 
Namely, its updates can be expressed as

\begin{enumerate}[wide, labelwidth=5pt, labelindent=5pt]
  \item Choose a direction of movement that minimizes a linear local approximation, regularized with the metric induced by $\psi$.
    \begin{equation*}
      \Delta x = \argmin \limits_{x \in \mathcal{T}_{x_{k}} \mathcal{M}} [D_xf] x + \frac{1}{2} x^T \left[D^2\psi(x_k)\right] x
    \end{equation*}

  \item
    Compute $x_{k+1}$ by moving into this direction, according to the \emph{dual} geometry of $\psi$. 
    \begin{equation*}
      x_{k+1} = \Exp_{x_{k}}^{\psi}\left(\Delta x\right)
    \end{equation*}
\end{enumerate}

For $\psi$ ($\phi$) a complete Bregman divergence on $\C$ ($\K$), this form of mirror descent can be readily extended to Problem~\eqref{eqn:problem-reduced}, resulting in \emph{competitive mirror descent (CMD)}: 
\begin{enumerate}[wide, labelwidth=5pt, labelindent=5pt]
  \item Solve for a local Nash equilibrium where both players try to minimize the bilinear local approximation of their objective, regularized with the metrics induced by $\psi$ and $\phi$.
  \begin{align*}
    \label{eqn:cmd-localgame}
    \Delta x = \argmin \limits_{x \in \mathcal{T}_{x_{k}} \C} &  \left[D_x f\right]x + x^{\top} \left[D_{xy}^2 f\right] y + \left[D_{y} f\right] y + \frac{1}{2} x^{\top} \left[D^2 \psi \right] x \\
    \Delta y = \argmin \limits_{y \in \mathcal{T}_{y_{k}} \K} &  \left[D_x g\right]x + x^{\top} \left[D_{xy}^2 g\right] y + \left[D_{y} g\right] y + \frac{1}{2} y^{\top} \left[D^2 \phi \right] y 
  \end{align*}

  \item Compute $x_{k+1}$ ($y_{k+1}$) by moving into this direction according to the dual geometries of $\psi$ ($\phi$).
  \begin{equation*}
    x_{k+1} = \Exp^{\psi_{\C}}_{x_k}\left(\Delta x\right), \quad y_{k+1} = \Exp^{\phi}_{y_k}\left(\Delta y\right)
  \end{equation*}
\end{enumerate}

Since $\psi$ and $\phi$ are complete, each iterate is guaranteed to be feasible, while the local game~\eqref{eqn:CGD-localgame} is quadratic and can be solved in closed form, resulting in algorithm~\ref{alg:cmd}.

\section{Numerical implementation and experiments}
\begin{wrapfigure}{}{0.36\textwidth}
  \vspace{-15pt}
  \centering
  \includegraphics[width=0.17\textwidth]{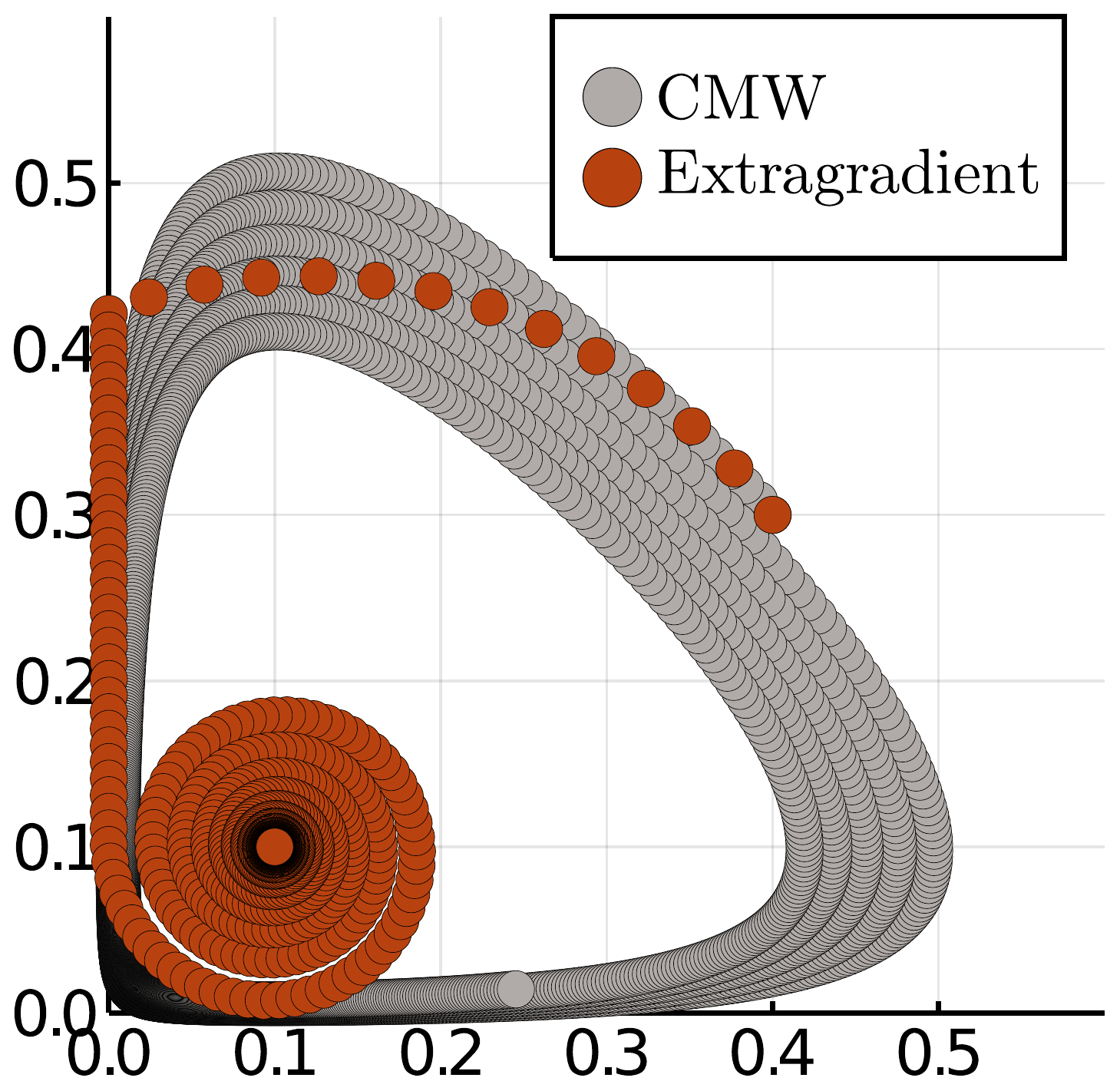}
  \includegraphics[width=0.17\textwidth]{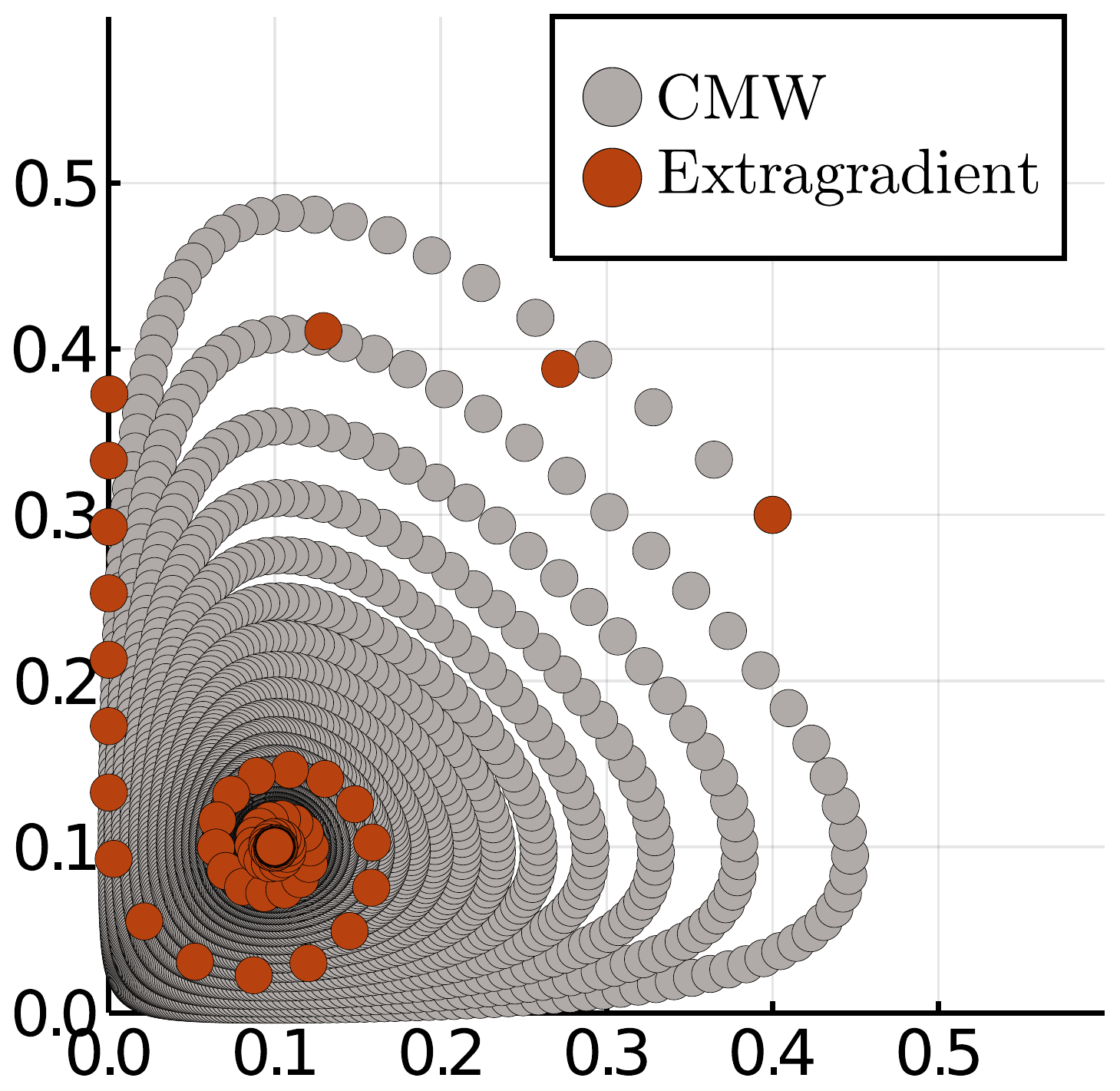}
  \includegraphics[width=0.17\textwidth]{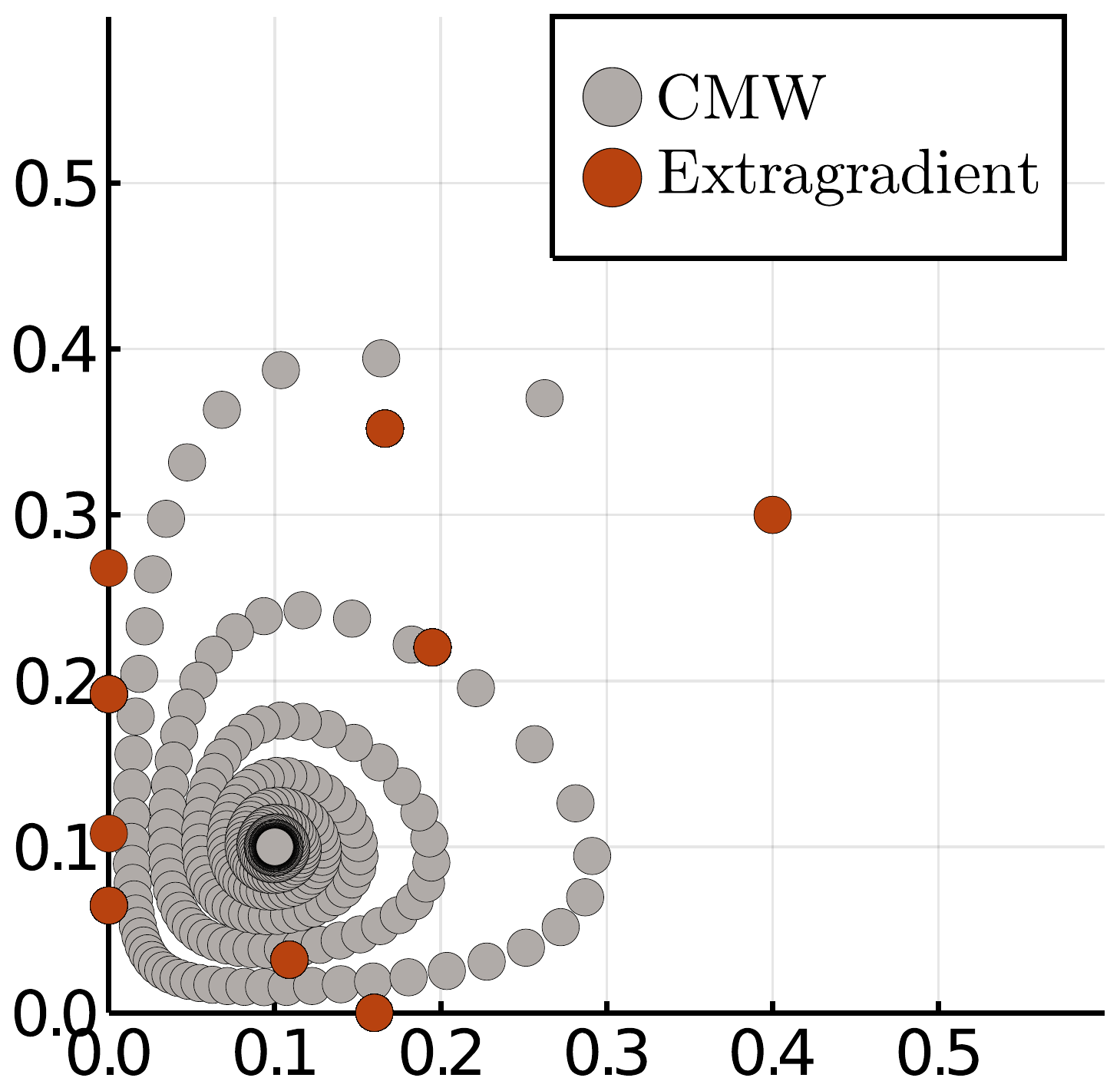}
  \includegraphics[width=0.17\textwidth]{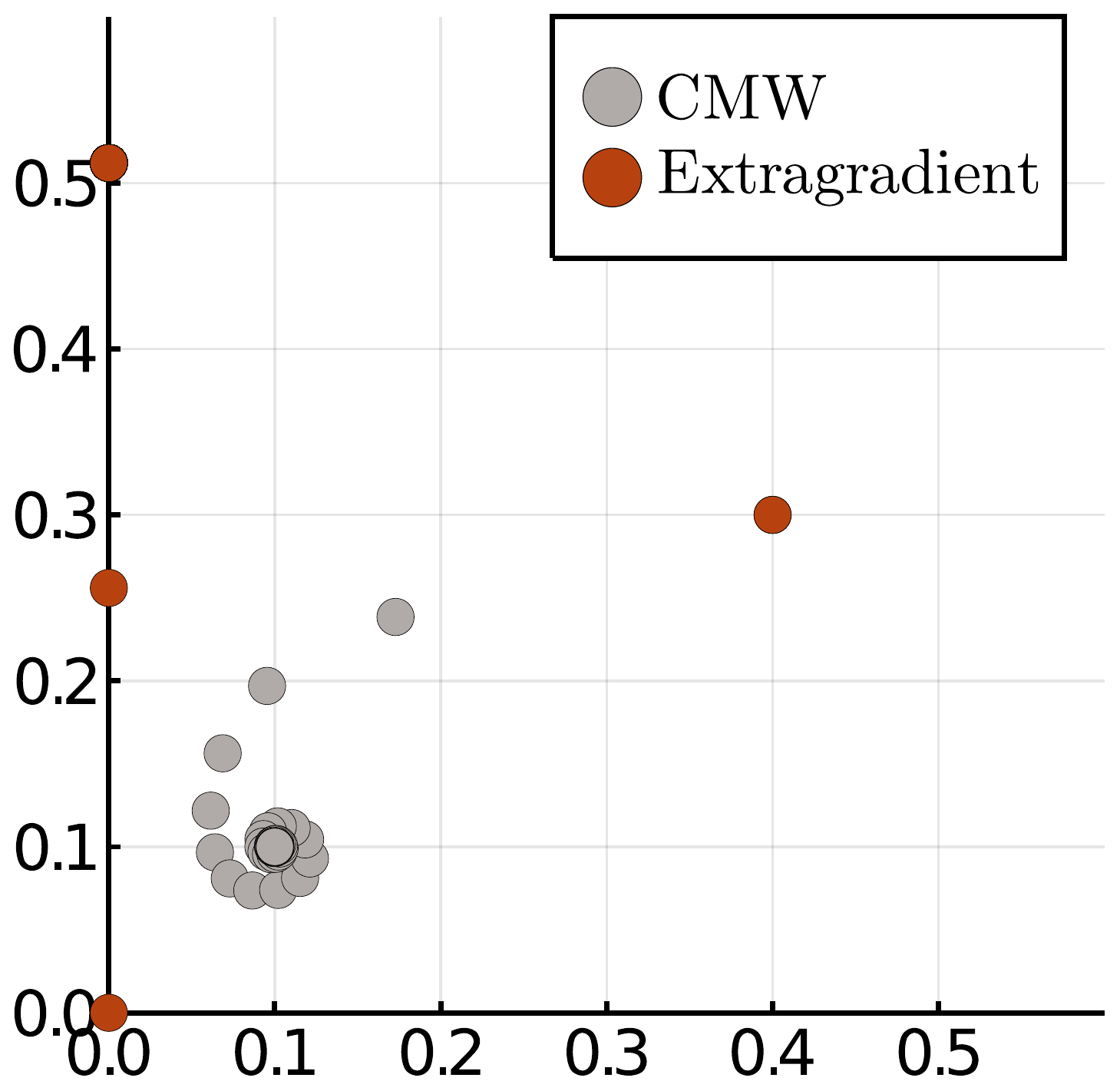}
  \caption{\label{fig:comparison} CMW and PX applied to $f(x,y) = \alpha (x-0.1)(y-0.1) = -g(x,y)$ ($\alpha \in \{0.1, 0.3, 0.9, 2.7\}$). For small $\alpha$, PX converges faster but for large $\alpha$ it diverges.}
  \vspace{-25pt}
\end{wrapfigure}
We defer the discussion of numerical implementation and numerical experiments to the supplementary material, noting that as discussed in \citep{schafer2019competitive,schafer2019implicit}, the matrix inverse in CMD does not prevent it from scaling to large problems.

As discussed in \citep{schafer2019competitive}[Section 3], the \emph{competitive term} involving the mixed Hessian $D_{xy}^2f, D_{yx}^2g$ is crucial to ensure convergence for bilinear problems. 
Most methods that include a competitive term such as \citep{mescheder2017numerics,balduzzi2018mechanics,letcher2019differentiable,gemp2018global} encounter the empty threats-phenomenon described in Section~\ref{sec:empty-threats} when combined with a projection.
A notable exception is the projected extragradient method (PX) \citep{korpelevich1977extragradient}(see also \citep{facchinei2003finite}[Chapter 12]) which we therefore see as the main competitor to CMD.
The main practical advantage of CMD over the projected extragradient method is that just as CGD, it can handle strong interactions between the two players (large $D_{xy}^2f, D_{yx}^2g$), without needing to reduce its step size. 

\section{Conclusion}
In this work we combine competitive gradient descent \citep{schafer2019competitive} and mirror descent \citep{nemirovsky1983problem} to obtain a general algorithm for constrained competitive optimization.
Our method uses ideas from information geometry \citep{raskutti2015information,amari2007methods,amari2016information} to derive an update rule that incorporates the interaction between the players and the nonlinear structure of the constraint set, while only solving a linear system of equations at each iteration.
There are numerous directions for future work including the development of a convergence theory, using a multilinear local game to extend CMD to more than two players, and studying its application to more general conic constraints.

\section*{Broader impact}
This work proposes a basic optimization method and thus its societal impact is hard to predict. We hope that by providing better tools for constrained optimization we can provide practitioners with more flexibility in designing methods for reliable machine learning.

\begin{ack} 
AA is supported in part by Bren endowed chair, DARPA PAIHR00111890035, LwLL grants, Raytheon, BMW, Microsoft, Google, Adobe faculty fellowships, and DE Logi grant.
FS gratefully acknowledges support by the Ronald and Maxine Linde Institute of Economic and Management Sciences at Caltech.
FS and HO gratefully acknowledge support by  the Air Force Office of Scientific Research under award number FA9550-18-1-0271 (Games for Computation and Learning) and the Office of Naval Research under award number N00014-18-1-2363.
\end{ack}

\medskip
{\small \bibliography{cmd} \par}

\begin{thebibliography}{10}

\bibitem{cohen2016group}
Taco Cohen and Max Welling.
\newblock Group equivariant convolutional networks.
\newblock In {\em International conference on machine learning}, pages
  2990--2999, 2016.

\bibitem{stewart2017label}
Russell Stewart and Stefano Ermon.
\newblock Label-free supervision of neural networks with physics and domain
  knowledge.
\newblock In {\em Thirty-First AAAI Conference on Artificial Intelligence},
  2017.

\bibitem{raissi2019physics}
Maziar Raissi, Paris Perdikaris, and George~E Karniadakis.
\newblock Physics-informed neural networks: A deep learning framework for
  solving forward and inverse problems involving nonlinear partial differential
  equations.
\newblock {\em Journal of Computational Physics}, 378:686--707, 2019.

\bibitem{anderson2019cormorant}
Brandon Anderson, Truong~Son Hy, and Risi Kondor.
\newblock Cormorant: Covariant molecular neural networks.
\newblock In {\em Advances in Neural Information Processing Systems}, pages
  14510--14519, 2019.

\bibitem{achiam2017constrained}
Joshua Achiam, David Held, Aviv Tamar, and Pieter Abbeel.
\newblock Constrained policy optimization.
\newblock In {\em Proceedings of the 34th International Conference on Machine
  Learning-Volume 70}, pages 22--31. JMLR. org, 2017.

\bibitem{miryoosefi2019reinforcement}
Sobhan Miryoosefi, Kiant{\'e} Brantley, Hal Daume~III, Miro Dudik, and Robert~E
  Schapire.
\newblock Reinforcement learning with convex constraints.
\newblock In {\em Advances in Neural Information Processing Systems}, pages
  14070--14079, 2019.

\bibitem{bacon2019lagrangian}
Pierre-Luc Bacon, Florian Sch{\"a}fer, Clement Gehring, Animashree Anandkumar,
  and Emma Brunskill.
\newblock A lagrangian method for inverse problems in reinforcement learning.
\newblock 2019.

\bibitem{cotter2018training}
Andrew Cotter, Maya Gupta, Heinrich Jiang, Nathan Srebro, Karthik Sridharan,
  Serena Wang, Blake Woodworth, and Seungil You.
\newblock Training well-generalizing classifiers for fairness metrics and other
  data-dependent constraints.
\newblock {\em arXiv preprint arXiv:1807.00028}, 2018.

\bibitem{cotter2018two}
Andrew Cotter, Heinrich Jiang, and Karthik Sridharan.
\newblock Two-player games for efficient non-convex constrained optimization.
\newblock {\em arXiv preprint arXiv:1804.06500}, 2018.

\bibitem{narasimhan2019optimizing}
Harikrishna Narasimhan, Andrew Cotter, and Maya Gupta.
\newblock Optimizing generalized rate metrics with three players.
\newblock In {\em Advances in Neural Information Processing Systems}, pages
  10746--10757, 2019.

\bibitem{madry2017towards}
Aleksander Madry, Aleksandar Makelov, Ludwig Schmidt, Dimitris Tsipras, and
  Adrian Vladu.
\newblock Towards deep learning models resistant to adversarial attacks.
\newblock {\em arXiv preprint arXiv:1706.06083}, 2017.

\bibitem{goodfellow2014generative}
Ian Goodfellow, Jean Pouget-Abadie, Mehdi Mirza, Bing Xu, David Warde-Farley,
  Sherjil Ozair, Aaron Courville, and Yoshua Bengio.
\newblock Generative adversarial nets.
\newblock In {\em Advances in neural information processing systems}, pages
  2672--2680, 2014.

\bibitem{schafer2019competitive}
Florian Sch{\"a}fer and Anima Anandkumar.
\newblock Competitive gradient descent.
\newblock In {\em Advances in Neural Information Processing Systems}, pages
  7623--7633, 2019.

\bibitem{nemirovsky1983problem}
Arkadi~Semenovich Nemirovsky and David~Borisovich Yudin.
\newblock Problem complexity and method efficiency in optimization.
\newblock 1983.

\bibitem{raskutti2015information}
Garvesh Raskutti and Sayan Mukherjee.
\newblock The information geometry of mirror descent.
\newblock {\em IEEE Transactions on Information Theory}, 61(3):1451--1457,
  2015.

\bibitem{schafer2019implicit}
Florian Sch{\"a}fer, Hongkai Zheng, and Anima Anandkumar.
\newblock Implicit competitive regularization in gans.
\newblock {\em arXiv preprint arXiv:1910.05852}, 2019.

\bibitem{iusem2003convergence}
Alfredo~N Iusem.
\newblock On the convergence properties of the projected gradient method for
  convex optimization.
\newblock {\em Computational \& Applied Mathematics}, 22(1):37--52, 2003.

\bibitem{amari2007methods}
Shun-ichi Amari and Hiroshi Nagaoka.
\newblock {\em Methods of information geometry}, volume 191.
\newblock American Mathematical Soc., 2007.

\bibitem{amari2016information}
Shun-ichi Amari.
\newblock {\em Information geometry and its applications}, volume 194.
\newblock Springer, 2016.

\bibitem{mescheder2017numerics}
Lars Mescheder, Sebastian Nowozin, and Andreas Geiger.
\newblock The numerics of gans.
\newblock In {\em Advances in Neural Information Processing Systems}, pages
  1825--1835, 2017.

\bibitem{balduzzi2018mechanics}
David Balduzzi, Sebastien Racaniere, James Martens, Jakob Foerster, Karl Tuyls,
  and Thore Graepel.
\newblock The mechanics of n-player differentiable games.
\newblock {\em arXiv preprint arXiv:1802.05642}, 2018.

\bibitem{letcher2019differentiable}
Alistair Letcher, David Balduzzi, S{\'e}bastien Racaniere, James Martens,
  Jakob~N Foerster, Karl Tuyls, and Thore Graepel.
\newblock Differentiable game mechanics.
\newblock {\em Journal of Machine Learning Research}, 20(84):1--40, 2019.

\bibitem{gemp2018global}
Ian Gemp and Sridhar Mahadevan.
\newblock Global convergence to the equilibrium of gans using variational
  inequalities.
\newblock {\em arXiv preprint arXiv:1808.01531}, 2018.

\bibitem{korpelevich1977extragradient}
GM~Korpelevich.
\newblock Extragradient method for finding saddle points and other problems.
\newblock {\em Matekon}, 13(4):35--49, 1977.

\bibitem{facchinei2003finite}
Francisco Facchinei and Jong-Shi Pang.
\newblock {\em Finite-dimensional variational inequalities and complementarity
  problems. {V}ol. {II}}.
\newblock Springer Series in Operations Research. Springer-Verlag, New York,
  2003.

\bibitem{ekeland1999convex}
Ivar Ekeland and Roger Temam.
\newblock {\em Convex analysis and variational problems}, volume~28.
\newblock Siam, 1999.

\bibitem{saad2003iterative}
Yousef Saad.
\newblock {\em Iterative methods for sparse linear systems}, volume~82.
\newblock siam, 2003.

\bibitem{pearlmutter1994fast}
Barak~A Pearlmutter.
\newblock Fast exact multiplication by the hessian.
\newblock {\em Neural computation}, 6(1):147--160, 1994.

\end{thebibliography}

\appendix

\section{Simplifying constraints by duality}
\paragraph{Constrained competitive optimization:} 
The most general class of problems that we are concerned with is of the form 
\begin{equation}
  \label{ap:eqn:constrainedComp}
  \min_{\substack{x \in \C, \\ \tf(x) \in \tC}} f(x, y), \quad \min_{\substack{y \in \K, \\ \tg(y) \in \tK}} g(x, y),
\end{equation}
where $\C \subset \Reals^m, \K \in \Reals^n$ are convex sets, $\tf: \C \longrightarrow \Reals^{\tn}$ and $\tg: \K \longrightarrow \Reals^{\tm}$ are continuous and piecewise differentiable multivariate functions of the two agents' decision variables $x$ and $y$ and $\tC, \tK$ are closed convex cones.
This framework is extremely general and by choosing suitable functions $\tf, \tg$ and convex cones $\tC, \tK$ it can implement a variety of nonlinear equality, inequality, and positive-definiteness constraints.
While there are many ways in which a problem can be cast into the above form, we are interested in the case where the $f, \tf, g, \tg$ are allowed to be \emph{complicted}, for instance given in terms of neural networks, while the $\tC, \tK$ are simple and well-understood.
For convex constraints and objectives $f$ and $g$ the canonical solution concept is a \emph{Nash equilibrium}.
\begin{definition}
  \label{ap:def:nash}
  A \emph{Nash equilibrium} of Problem~\ref{ap:eqn:constrainedComp} is a pair of pair of feasible strategies $\left(\bar{x}, \bar{y}\right)$ such that $\bar{x}$ ($\bar{y}$) is the optimal strategy for $x$ ($y$) given $y = \bar{y}$ ($x = \bar{x}$).
\end{definition}
In the non-convex case it is less clear what should constitute a solution and it has been argued \citep{schafer2019implicit} that meaningful solutions need not even be local Nash equilibria.

\paragraph{Lagrange multipliers for linear constraints:}
Using the classical technique of Lagrangian duality, the complicated parameterization $f, \tf, g, \tg$ and the simple constraints given by the $\tC, \tK$ can be further decoupled.
The polar of a convex cone $\K$ is defined as $\K^{\circ} \defeq \left\{y : \sup_{x \in \K} x^{\top} y \leq 0 \right\}$.
Using this definition, we can rewrite Problem~\eqref{ap:eqn:constrainedComp} as 
\begin{equation}
  \label{ap:eqn:constraint-as-max}
  \min_{\substack{x \in \C, \\ \mu \in \tK^{\circ}}} f(x, y) + \max_{\nu \in \tC^{\circ}} \nu^{\top} \tf(x), 
  \quad \min_{\substack{y \in \K, \\ \nu \in \tC^{\circ}}} g(x, y) + \max_{\mu \in \tK^{\circ}} \mu^{\top} g(y).
\end{equation}
Here we used the fact that the maxima are infinity if any constraint is violated and zero, otherwise.

\paragraph{Watchmen watching watchmen:}
We can now attempt to simplify the problem by making $\mu_j$ ($\nu_i$) decision variables of the $y$ ($x$) player and adding a zero sum objective to the game that incentivizes both players to enforce each other's compliance with the constraints, resulting in 
\begin{equation}
  \label{ap:eqn:constrainedCompLagrange}
  \min_{\substack{x \in \C, \\ \mu \in \tK^{\circ}}} f(x, y) + \nu^{\top} \tf(x)  -  \mu^{\top} \tg(y), 
  \quad \min_{\substack{y \in \K, \\ \nu \in \C^{\circ}}} g(x, y) + \mu^{\top} \tg(y)  - \nu^{\top} \tf(x).
\end{equation}

Is there a relationship between the Nash equilibria of Problems~\eqref{ap:eqn:constrainedComp}~and~\eqref{ap:eqn:constrainedCompLagrange}? 
It turns out that this question can be studied in terms of two \emph{decoupled} zero-sum games.
\begin{lemma}
  \label{ap:lem:decoupling}
  A pair of points $\bar{x}$, $\bar{y}$ is a Nash equilibrium of Problem~\ref{ap:eqn:constrainedComp} if and only if $\bar{x}$ and $\bar{y}$ are minimizers of 
  \begin{equation}
    \label{ap:eqn:decoupled-constraint}
    \min_{\substack{x \in \C, \\ \tf(x) \in \tC}} F(x), \quad \min_{\substack{y \in \K, \\ \tg(y) \in \tK}} G(y),
  \end{equation}
  for $F(x) \defeq f(x,\bar{y})$ and $G(y) \defeq g(\bar{x},y)$.
  Similarly, a pair of strategies $(\bar{x}, \bar{\mu})$, $(\bar{y}, \bar{\nu})$ is a Nash equilibrium of Problem~\ref{ap:eqn:constrainedCompLagrange} if and only if $(\bar{x}, \bar{\nu})$, $(\bar{y}, \bar{\mu})$ are Nash equilibria of the decoupled zero sum games 
  \begin{equation}
    \label{ap:eqn:decoupled-lagrange}
    \min_{x \in \C} \max_{\nu \in \tC^{\circ}} F(x) + \nu^{\top} \tf(x), 
    \quad \min_{y \in \K} \max_{\mu \in \tK^{\circ}} G(y) + \mu^{\top} \tg(y).
  \end{equation}
\end{lemma}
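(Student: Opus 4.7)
The plan is to prove both statements by directly unpacking the best-response definition of a Nash equilibrium and exploiting that the augmented losses in~\eqref{ap:eqn:constrainedCompLagrange} decouple additively in each player's own variables. The first statement is essentially tautological: by Definition~\ref{ap:def:nash}, $(\bar{x}, \bar{y})$ is a Nash equilibrium of~\eqref{ap:eqn:constrainedComp} iff $\bar{x}$ is optimal for the $x$-player given $y = \bar{y}$ and $\bar{y}$ is optimal for the $y$-player given $x = \bar{x}$, and substituting $y = \bar{y}$ into the $x$-player's problem yields exactly $\min_{x \in \C,\, \tf(x) \in \tC} F(x)$, and symmetrically for $\bar{y}$.

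For the second statement, I would fix a candidate Nash $((\bar{x}, \bar{\mu}), (\bar{y}, \bar{\nu}))$ of~\eqref{ap:eqn:constrainedCompLagrange} and substitute the opponent's strategy into the $x$-player's objective, obtaining $f(x, \bar{y}) + \bar{\nu}^{\top} \tf(x) - \mu^{\top} \tg(\bar{y})$. The decisive observation is that $x$ and $\mu$ appear in additively separate terms, so minimization over $(x, \mu) \in \C \times \tK^{\circ}$ splits into two independent subproblems: $\bar{x} \in \argmin_{x \in \C} F(x) + \bar{\nu}^{\top} \tf(x)$ and, after a sign flip, $\bar{\mu} \in \argmax_{\mu \in \tK^{\circ}} \mu^{\top} \tg(\bar{y})$. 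The same decomposition applied to the $y$-player yields the corresponding conditions on $\bar{y}$ and $\bar{\nu}$.

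Regrouping these four elementary conditions by the pairs $(\bar{x}, \bar{\nu})$ and $(\bar{y}, \bar{\mu})$ exactly reproduces the best-response conditions defining the Nash equilibria of the two zero-sum games in~\eqref{ap:eqn:decoupled-lagrange}, and the argument is reversible since every step was a biconditional. The one point that requires care is the well-definedness of the inner maxima $\max_{\nu \in \tC^{\circ}} \nu^{\top} \tf(\bar{x})$ and $\max_{\mu \in \tK^{\circ}} \mu^{\top} \tg(\bar{y})$: by the bipolar identity $(\tC^{\circ})^{\circ} = \tC$ finiteness of these maxima forces $\tf(\bar{x}) \in \tC$ and $\tg(\bar{y}) \in \tK$, after which the optimality conditions for $\bar{\nu}$ and $\bar{\mu}$ reduce to the complementary slackness relations $\bar{\nu}^{\top} \tf(\bar{x}) = 0$ and $\bar{\mu}^{\top} \tg(\bar{y}) = 0$. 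The main bookkeeping hurdle is simply to keep track of the cross-pairing by which $\bar{\nu}$ is chosen by the $y$-player yet appears in the $x$-player's best-response condition, and symmetrically for $\bar{\mu}$; no analytic difficulty beyond this reindexing arises.
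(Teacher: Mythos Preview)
Your proposal is correct and follows essentially the same approach as the paper's proof: both arguments unpack Definition~\ref{ap:def:nash}, exploit the additive separability of each player's augmented objective in its own decision variables $(x,\mu)$ and $(y,\nu)$, and then regroup the resulting four optimality conditions into the cross-paired pairs $(\bar{x},\bar{\nu})$ and $(\bar{y},\bar{\mu})$. Your write-up is in fact more careful than the paper's terse version---the remarks on the bipolar identity and complementary slackness are not strictly required for the stated equivalence but are a sound consistency check.
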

\begin{proof}
  The first part of the Lemma follows directly from the Definition~\ref{ap:def:nash} of a Nash equilibrium.
  For the second part we observe that $(\bar{x}, \bar{\mu})$ ($(\bar{y}, \bar{\nu})$) is an optimal strategy against $(\bar{y}, \bar{\nu})$ ($(\bar{x}, \bar{\mu})$) if and only if $\bar{x}$ ($\bar{y}$) minimizes $F$ ($G$) over $\C$ ($\K$) and $\bar{\mu}$ ($\bar{\nu}$) maximizes $\mu \mapsto \mu^{\top} g(\bar{y})$ ($\nu \mapsto \nu^{\top} f(\bar{x})$).
  But this is exactly the definition of $(\bar{x},\bar{\nu})$ ($(\bar{y},\bar{\mu})$) being a Nash equilibrium of Problem~\ref{ap:eqn:decoupled-lagrange}.
\end{proof}

While this result follows directly from the definition, it reduces the question to that of constrained single-agent optimization, which has been studied extensively, allowing us to decuce the following theorem. 
For convex, strictly feasible problems (\emph{``Slater's condition''}), we can show the equivalence of Problems~\eqref{ap:eqn:constrainedComp}~and~\eqref{ap:eqn:constrainedCompLagrange}.
In order to formulate these results in full generality, we need the following definition.
\begin{definition}
We call a function $f: \K \longrightarrow \Reals^n $ \emph{convex with respect to the cone} $\C \subset \Reals^n$ if 
  \begin{equation*}
    \tau f(x) + \left(1 - \tau\right) f(y) - f\left(\tau x + \left(1 - \tau\right) y\right) \in \C, \ \forall \tau \in [0,1],\  x,y \in \K.
  \end{equation*}
\end{definition}
With this definition, we can formulate the following theorem.
\begin{theorem}
    Assume that the following holds:
  \begin{enumerate}[wide, labelwidth=20pt, labelindent=5pt, label=(\roman*):]
    \item $f$, $\tf$, $g$, and $\tg$ are continuous.
    \item $\tf$ ($\tg$) is convex with respect to $-\tC$ ($-\tK$).
    \item For all $\bar{y} \in \K$ ($\bar{x} \in \C$), $F$ ($G$) as defined in Lemma~\ref{ap:lem:decoupling} is convex.
    \item For all $\bar{x} \in \C$ and $\bar{y} \in \K$, the minimal values of Problem~\ref{ap:eqn:decoupled-constraint} are finite (not $- \infty$).
    \item There exist $(x,y)$ such that $x \in \interior \C$, $\tf(x) \in \interior \tC$, $y \in \interior \K$, $\tg(y) \in \interior \tK$.
  \end{enumerate}
  Then, $\bar{x}$ and $\bar{y}$ are a Nash equilibrium of Problem~\eqref{ap:eqn:constrainedComp} if and only if there exist $\bar{\nu}$ and $\bar{\mu}$ such that $(\bar{x}, \bar{\mu})$ and $(\bar{y}, \bar{\nu})$ are a Nash equilibrium of Problem~\eqref{ap:eqn:constrainedCompLagrange}.

\end{theorem}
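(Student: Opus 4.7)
The strategy is to apply Lemma~\ref{ap:lem:decoupling} to reduce the two-player equivalence to the equivalence of two decoupled single-agent statements, and then invoke classical Lagrangian duality for cone-constrained convex programs to close each of the two decoupled equivalences. More precisely, Lemma~\ref{ap:lem:decoupling} tells us that $(\bar x,\bar y)$ is a Nash equilibrium of \eqref{ap:eqn:constrainedComp} if and only if $\bar x$ minimizes $F$ on $\{x\in\C:\tf(x)\in\tC\}$ and $\bar y$ minimizes $G$ on $\{y\in\K:\tg(y)\in\tK\}$, and that a pair $(\bar x,\bar\mu),(\bar y,\bar\nu)$ is a Nash equilibrium of \eqref{ap:eqn:constrainedCompLagrange} if and only if $(\bar x,\bar\nu)$ is a saddle point of $\min_{x\in\C}\max_{\nu\in\tC^\circ} F(x)+\nu^\top \tf(x)$ and $(\bar y,\bar\mu)$ is a saddle point of $\min_{y\in\K}\max_{\mu\in\tK^\circ} G(y)+\mu^\top \tg(y)$. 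Thus the task reduces to proving, for the $x$-player (and symmetrically for the $y$-player), the equivalence between primal optimality of $\bar x$ under the cone constraint $\tf(x)\in\tC$ and the existence of a dual certificate $\bar\nu\in\tC^\circ$ forming a saddle point of the Lagrangian.

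\textbf{Key steps.} First, I would observe that convexity of $F$, convexity of $\tf$ with respect to $-\tC$, and the conic structure of $\tC$ together ensure that the Lagrangian $L(x,\nu)=F(x)+\nu^\top \tf(x)$ is convex in $x\in\C$ for each $\nu\in\tC^\circ$ and linear (hence concave) in $\nu$ for each fixed $x$. Next, I would establish the easy direction: if $(\bar x,\bar\nu)$ is a saddle point with $\bar\nu\in\tC^\circ$, then the inner maximum $\sup_{\nu\in\tC^\circ}\nu^\top \tf(\bar x)$ is finite, which by definition of the polar cone forces $\tf(\bar x)\in\tC$, and the sup is attained at $0$ giving $\bar\nu^\top \tf(\bar x)=0$; combined with the saddle point inequality on the $x$-side this shows $\bar x$ is feasible and primal optimal. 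For the harder converse, I would invoke the cone-constrained version of the Karush--Kuhn--Tucker / Fenchel--Rockafellar theorem: under convexity (hypothesis (iii) and (ii)), finiteness of the primal value (hypothesis (iv)), and the Slater-type interior feasibility condition (hypothesis (v)), strong duality holds and a dual optimal multiplier $\bar\nu\in\tC^\circ$ exists such that $(\bar x,\bar\nu)$ is a saddle point of $L$.

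\textbf{Putting it together.} The existence of $\bar\nu$ and the analogous existence of $\bar\mu$ from the $y$-player problem, both obtained via Lemma~\ref{ap:lem:decoupling} together with cone-constrained strong duality, yields the two Nash equilibria of the Lagrangian problem. Conversely, the saddle-point property for each player implies, through the easy direction above and Lemma~\ref{ap:lem:decoupling}, that $(\bar x,\bar y)$ is a Nash equilibrium of the original constrained problem.

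\textbf{Main obstacle.} The routine part is the saddle-point-to-primal direction and the application of Lemma~\ref{ap:lem:decoupling}. The genuine work is the existence of the dual multipliers: one needs the conic generalization of Slater's theorem, where feasibility in the \emph{interior} of $\tC$ (not merely in $\tC$) is used to separate the origin from the set $\{(\tf(x)-c,F(x)-r):x\in\C,c\in\tC,r\ge 0\}$ shifted below the optimal value. Care is required because the constraint $\tf(x)\in\tC$ is a generalized inequality in a closed convex cone rather than a finite system of scalar inequalities, so the separation must be carried out in $\Reals^{\tn}\times\Reals$ and the resulting separating functional identified with an element of $\tC^\circ$ via the polar-cone definition. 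Once this is done, the complementary slackness $\bar\nu^\top \tf(\bar x)=0$ and stationarity of $\bar x$ follow from the separation inequalities, completing the converse and hence the theorem.
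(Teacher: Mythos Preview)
Your proposal is correct and follows essentially the same route as the paper: reduce via Lemma~\ref{ap:lem:decoupling} to two decoupled single-agent problems, then invoke classical cone-constrained strong duality under Slater's condition to obtain the saddle-point/Lagrange-multiplier characterization in each. The paper simply cites \citep{ekeland1999convex}[Chapter 3, Theorem 5.1] for this last step rather than sketching the separation argument as you do, but the structure is identical.
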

\begin{proof}
  By Lemma~\ref{ap:lem:decoupling} it is enough to show that $\bar{x}$ and $\bar{y}$ are minimizers of Problem~\ref{ap:eqn:decoupled-constraint} if and only if they can be complemented with Lagrange multipliers $\bar{\nu}$ and $\bar{\mu}$ to obtain Nash equilibria of Problem~\ref{ap:eqn:decoupled-lagrange}.
  This result is shown, for instance, in \citep{ekeland1999convex}[Chapter 3, Theorem 5.1].
\end{proof}
\paragraph{A simplified problem:} 
In the general non-convex setting, the relationship between Problems~\eqref{ap:eqn:constrainedComp}~and~\eqref{ap:eqn:constrainedCompLagrange} is difficult to characterize. 
In this case, Problem~\eqref{ap:eqn:constrainedCompLagrange} serves as an approximation to Problem~\ref{ap:eqn:constrainedComp} that might be easier to solve.
Techniques for closing the duality gap in single agent optimization, such as the addition of redundant constraints, can also serve to improve the approximation of Problem~\eqref{ap:eqn:constrainedComp} by Problem~\ref{ap:eqn:constrainedCompLagrange}.

\section{Numerical implementation and experiments}

\paragraph{Dual coordinate system for improved stability:} 
In principle, either the primal, or the dual coordinate system can be used to keep track of the running iterate. 
However, we observe that storing the iterates in CGD in the dual coordinate system improves the numerical stability of the algorithm.

When expressing the update direction in the dual coordinate system, the local problem reads

\begin{align*}
  \min \limits_{x^* \in \Reals^m} &  \left[D_x f\right] \left[D^2 \psi \right]^{-1} x^* + x^{*,\top} \left[D^2 \psi \right]^{-1} \left[D_{xy}^2 f\right] \left[D^2 \phi \right]^{-1} y^* + \frac{1}{2} x^{*, \top} \left[D^2 \psi \right]^{-1} x^* \\
  \min \limits_{y^* \in \Reals^n} &  \left[D_y g\right] \left[D^2 \phi \right]^{-1} y^* + x^{*,\top} \left[D^2 \psi \right]^{-1} \left[D_{xy}^2 g\right] \left[D^2 \phi \right]^{-1} y^* + \frac{1}{2} y^{*, \top} \left[D^2 \phi \right]^{-1} y^* \\,
\end{align*}

where all derivatives are computed in the last iterate $(x_{k}, y_{k})$
Setting the derivatives with respect to $x*$ ($y^*$) to zero, we obtain  

\begin{align}
  \label{ap:eqn:deriv-zero1}
  & \left[D_x f\right] \left[D^2 \psi \right]^{-1} + \left(\left[D^2 \psi \right]^{-1} \left[D_{xy}^2 f\right] \left[D^2 \phi \right]^{-1} y^*\right)^{\top} + x^{*, \top} \left[D^2 \psi \right]^{-1} = 0 \\
  \label{ap:eqn:deriv-zero2}
  & \left[D_y g\right] \left[D^2 \phi \right]^{-1} + x^{*,\top} \left[D^2 \psi \right]^{-1} \left[D_{xy}^2 g\right] \left[D^2 \phi \right]^{-1} + y^{*, \top} \left[D^2 \phi \right]^{-1} = 0
\end{align}

We plug these equation into each other, to obtain 

\tiny
\begin{align*}
  &\left[D_x f\right] \left[D^2 \psi \right]^{-1} 
  - \left(\left[D_y g\right]  
  + x^{*,\top} \left[D^2 \psi \right]^{-1} \left[D_{xy}^2 g\right] \right)\left[D^2 \phi \right]^{-1} \left[D_{yx}^2f\right] \left[D^2\psi\right]^{-1}
  + x^{*, \top} \left[D^2 \psi \right]^{-1} = 0 \\
  &  \left[D_y g\right] \left[D^2 \phi \right]^{-1} 
  -  \left(\left[D_x f\right] + y^{*,\top} \left[D^2 \phi \right]^{-1} \left[D_{yx}^2 f\right] \right)\left[D^2 \psi \right]^{-1} \left[D_{xy}^2 g\right] \left[D^2 \phi \right]^{-1} + y^{*, \top} \left[D^2 \phi \right]^{-1} = 0
\end{align*}
\normalsize
Solving the above for $x^*$ and $y^*$, we obtain 
\tiny
\begin{align*}
  & x^* = -\left(\left[D^2\psi\right]^{-1} - \left[D^2\psi\right]^{-1} \left[D_{xy}^2 f\right] \left[D^2 \phi \right]^{-1} \left[D_{yx}^2 g\right] \left[D^2\psi\right]^{-1}  \right)^{-1} \left( \left[D^2\psi\right]^{-1} \left[D_x f \right]^{\top} - \left[D^2 \psi\right]^{-1} \left[D_{xy}^2 f \right] \left[D^2 \phi\right]^{-1} \left[D_y g\right]^{\top}  \right) \\
  & y^* = -\left(\left[D^2\phi\right]^{-1} - \left[D^2\phi\right]^{-1} \left[D_{yx}^2 g\right] \left[D^2 \psi \right]^{-1} \left[D_{xy}^2 f\right] \left[D^2\phi\right]^{-1}  \right)^{-1} \left( \left[D^2\phi\right]^{-1} \left[D_x g \right]^{\top} - \left[D^2 \phi\right]^{-1} \left[D_{yx}^2 g \right] \left[D^2 \psi\right]^{-1} \left[D_x f\right]^{\top}  \right).
\end{align*}
\normalsize
Once $x^{*}$ and $y^{*}$ have been computed, we can update the dual variables as $x_{k + 1} = x_{k} + x^{*}$ and $y_{k + 1} = y_{k} + y^{*}$.

\paragraph{Computing the updates in practice:} Just like competitive gradient descent \citep{schafer2019competitive}, CMD requires the solution of a system of linear equations at each step. 
While this may seem prohibitively expensive at first, \citep{schafer2019competitive,schafer2019implicit} show that CGD can be scaled to problems with millions of degrees of freedom.
This is achieved by using Krylov subspace methods such as the conjugate gradient or GMRES algorithms \citep{saad2003iterative} combined with mixed-mode automatic differentiation that allows to compute Hessian-vector products almost as cheaply as gradients \citep{pearlmutter1994fast}.
By using $\left[D^2 \psi \right]$ and $\left[D^2 \phi \right]$ as a preconditioner, the matrices that have to be inverted at each step are perturbations of the identity
\begin{align}
  \label{ap:eqn:matrix-1}
  \left(\Id - \left[D^2\psi\right]^{-\frac{1}{2}} \left[D_{xy}^2 f\right] \left[D^2 \phi \right]^{-1} \left[D_{yx}^2 g\right] \left[D^2\psi\right]^{-\frac{1}{2}}  \right) \\
  \label{ap:eqn:matrix-2}
  \left(\Id - \left[D^2\phi\right]^{-\frac{1}{2}} \left[D_{yx}^2 g\right] \left[D^2 \psi \right]^{-1} \left[D_{xy}^2 f\right] \left[D^2\phi\right]^{-\frac{1}{2}}  \right).
\end{align}
As discusssed in \citep{schafer2019competitive}, competing methods become unstable if the perturbations is large. 
If the perturbation is small, conjugate gradient or GMRES algorithms converge quickly, resulting in minimal overhead. 
\citep{schafer2019competitive} show that this adaptivity, together with the fact that CGD can use larger step sizes, allows it to outperform competing methods even when fairly accounting for the cost of the matrix inversion.
The computational cost can be reduced further by using the solution of the linear system in the last iteration as a warm start for the solution in the present iteration.
Finally, we can see from Equations~\ref{ap:eqn:deriv-zero1}~and~\ref{ap:eqn:deriv-zero2} that once $x^{*}$ ($y^{*}$) has been computed, $y^{*}$ ($x^{8}$) can be computed by only inverting $\left[D^2 \phi\right]$ ($\left[D^2 \psi\right]$), which can often be done in closed form.
Therefore we can alternatingly invert the matrices in Equations~\ref{ap:eqn:matrix-1}~and~\ref{ap:eqn:matrix-2}, one at each iteration.

\paragraph{Numerical experiments:} We will now provide numerical evidence for the practical performance of CMD. 
The Julia-code for the numerical experiments presented here can be found under \url{https://github.com/f-t-s/CMD}.
As discussed in Section 3 of the paper, naively combining cgd with a projection step can result in convergence to spurious stable points even in convex problems, due to the \emph{empty threats phenomenon}. 
The same argument applies to all other methods described in \citep{schafer2019competitive}[Section 3] as incuding a \emph{``competitive term''}, with the exception of OGDA and the closely related extragradient method.
We believe that hte empty threats phenomenon rules out projected versions of algorithms affected by it and therefore focus our numerical comparison on the projected extragradient method (PX) of \citep{korpelevich1977extragradient}.
We also focus on the special case of CMW in this section, leaving a more thorough exploration of other constraint sets and Bregman potentials to future work.

A first benefit of CMD is that it allows us to extend the robustness properties of CGD to conically constrained problems. 
As discussed in \citep{schafer2019implicit}, CGD is robust to strong interactions , without adjusting its step size.

To showcase this property, we consider the simple bilinear zero-sum game $f(x,y) = \alpha (x - 0.1)(y - 0.1) = -g(x, y)$ with $x$ and $y$ constrained to lie in $\Reals_+$. 
While the projected extragradient method converges faster for small $\alpha$ than CMD, we observes that the latter converges over the entire range of values for $\alpha$, whereas the extragradient method diverges as $\alpha$ gets too large (see Figure~\ref{fig:comparison}).

\begin{figure}
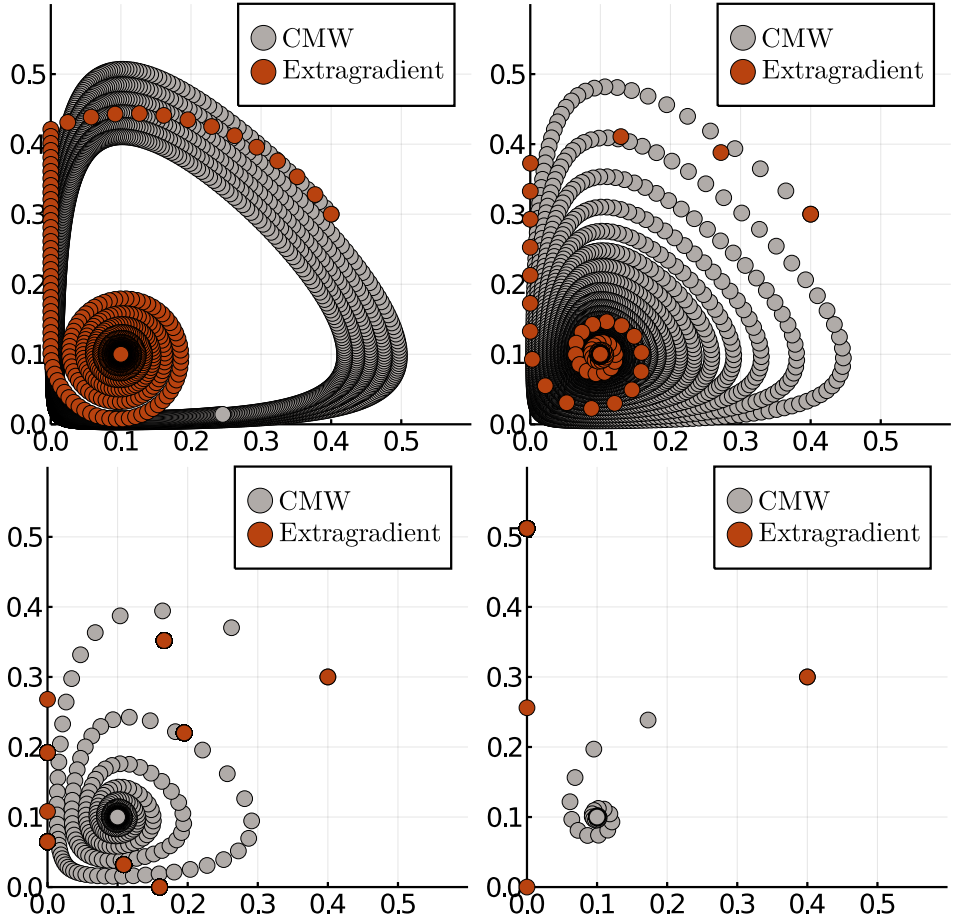

  \centering
  \includegraphics[width=0.45\textwidth]{figures/bilinear_positive_3to0.pdf}
  \includegraphics[width=0.45\textwidth]{figures/bilinear_positive_3to1.pdf}
  \includegraphics[width=0.45\textwidth]{figures/bilinear_positive_3to2.pdf}
  \includegraphics[width=0.45\textwidth]{figures/bilinear_positive_3to3.pdf}
  \caption{\label{ap:fig:comparison} CMW and PX applied to $f(x,y) = \alpha (x-0.1)(y-0.1) = -g(x,y)$ ($\alpha \in \{0.1, 0.3, 0.9, 2.7\}$). For small $\alpha$, PX converges faster but for large $\alpha$ it diverges.}
\end{figure}

We now move to a high-dimensional example that will further showcase the advantages of combining CGD with mirror descent.
We consider the robust regression problem on the probability simplex given by 
\begin{equation}
  \label{ap:eqn:regression-objective}
  \min \limits_{x:\ x \geq 0,\ \mathbf{1}^{\top} x = 1} \|Ax - b\|^2
\end{equation}
for $A \in \Reals^{50 \times 5000}$, $A_{ij} \sim \mathcal{N}\left(0, \Id\right)$ i. i. d., $\epsilon \sim \mathcal{N}\left(0, \Id\right)$, and $b = (A_{:,1} + A_{:,2})/2 + \epsilon$.
In order to enforce the normalization constraint $\mathbf{1}^{\top} x = 1$, we introduce a Lagrange multiplier $y \in \Reals$ and solve the competitive optimization problem given as
\begin{equation}
  \min \limits_{x \in \Reals_{+}^{5000}} \|Ax - b\|^2 + y (\mathbf{1}^{\top} x - 1), \quad \min \limits_{y \in \Reals} - y (\mathbf{1}^{\top} x - 1).
\end{equation}
We use different inverse step sizes $\alpha \in \{100, 1000\}$ for $x$ and $\beta \in \{1, 10, 100, 1000\}$ for $y$ and solve the competitive optimization problem using CMW and PX.
We then plot the loss incurred when using $x / \mathbf{1}^{\top} x$ as a function of the number of iterations.
The extragradient method stalled for all step sizes that we tried, which is why we introduce extramirror (PXM), a mirror descent version of extragradient that at each step uses the gradient computed in the next iteration of mirror descent to perform a mirror descent update in the present iteration.
As shown in Figure~\ref{ap:fig:iterplots} CMW is the only algorithm that converges over the entire range of $\alpha$ and $\beta$. 
The projected extragradient method always stalls, while the extramirror algorithm diverges and produces NAN values for the largest step size. 
Generally speaking, CMW converges faster for larger step sizes, while PXM converges faster for smaller step sizes, as shown in Figure~\ref{ap:fig:iterplots}
\begin{figure}
  \centering
  \includegraphics[width=0.45\textwidth]{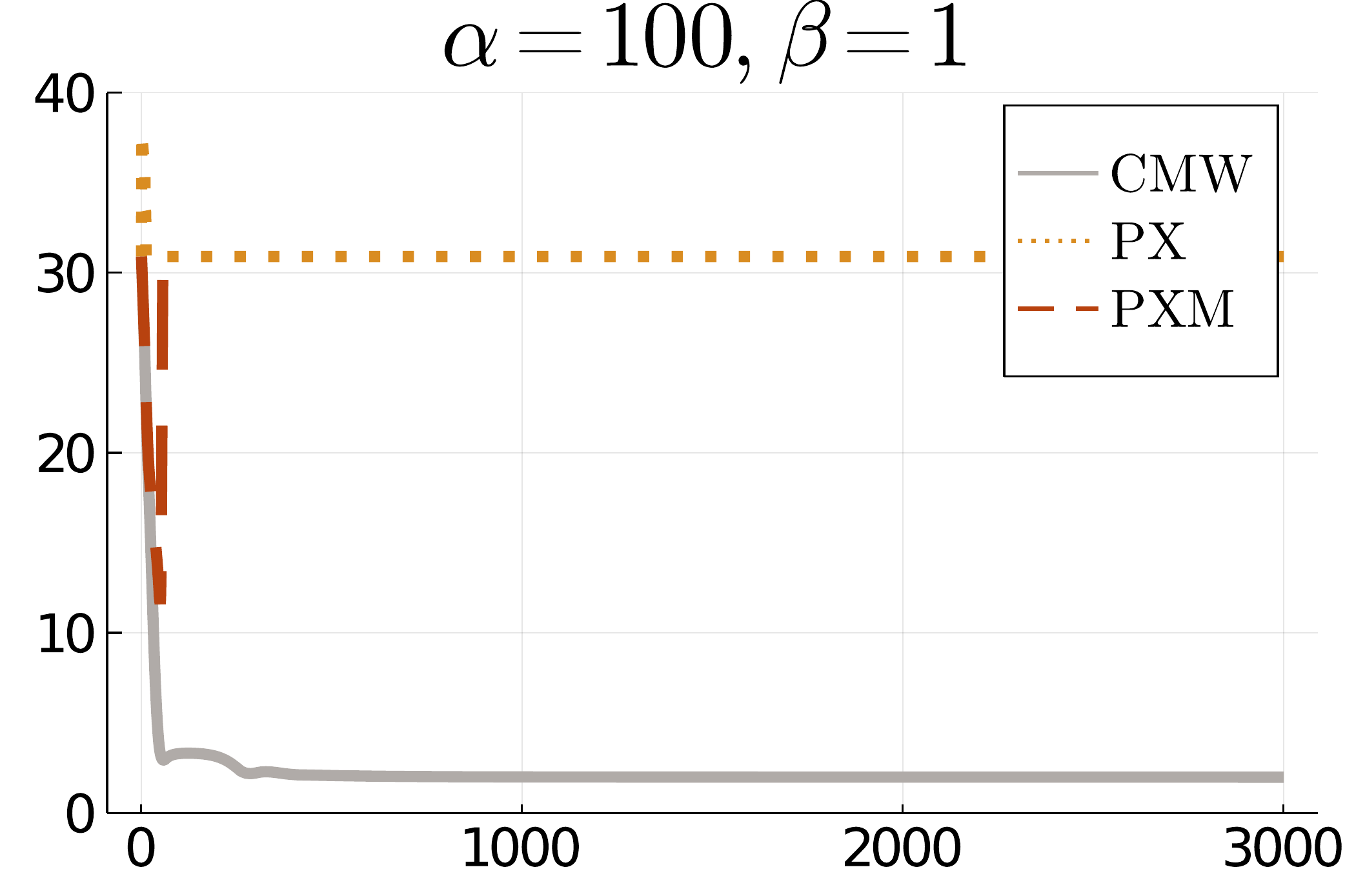}
  \includegraphics[width=0.45\textwidth]{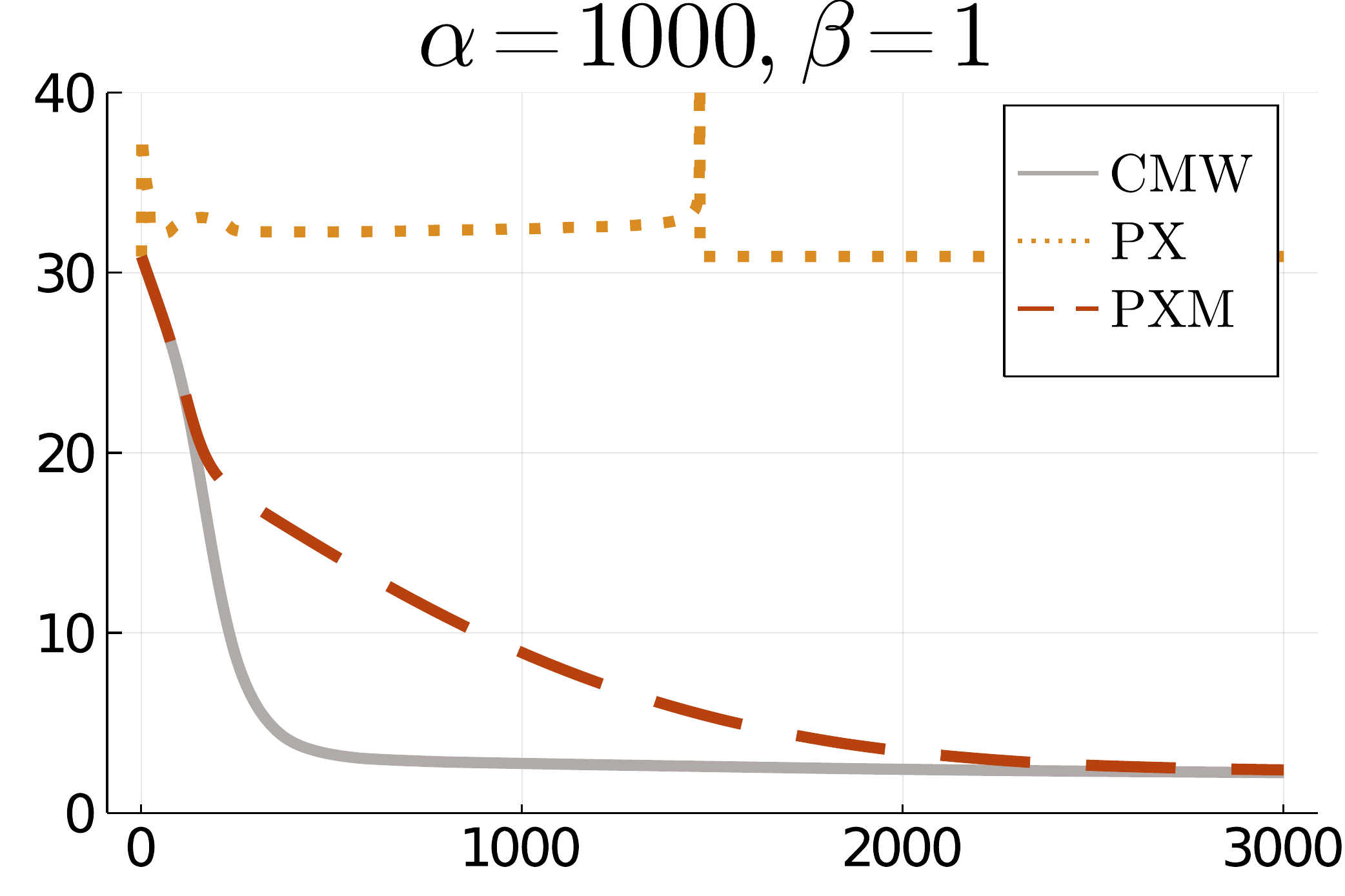}
  \includegraphics[width=0.45\textwidth]{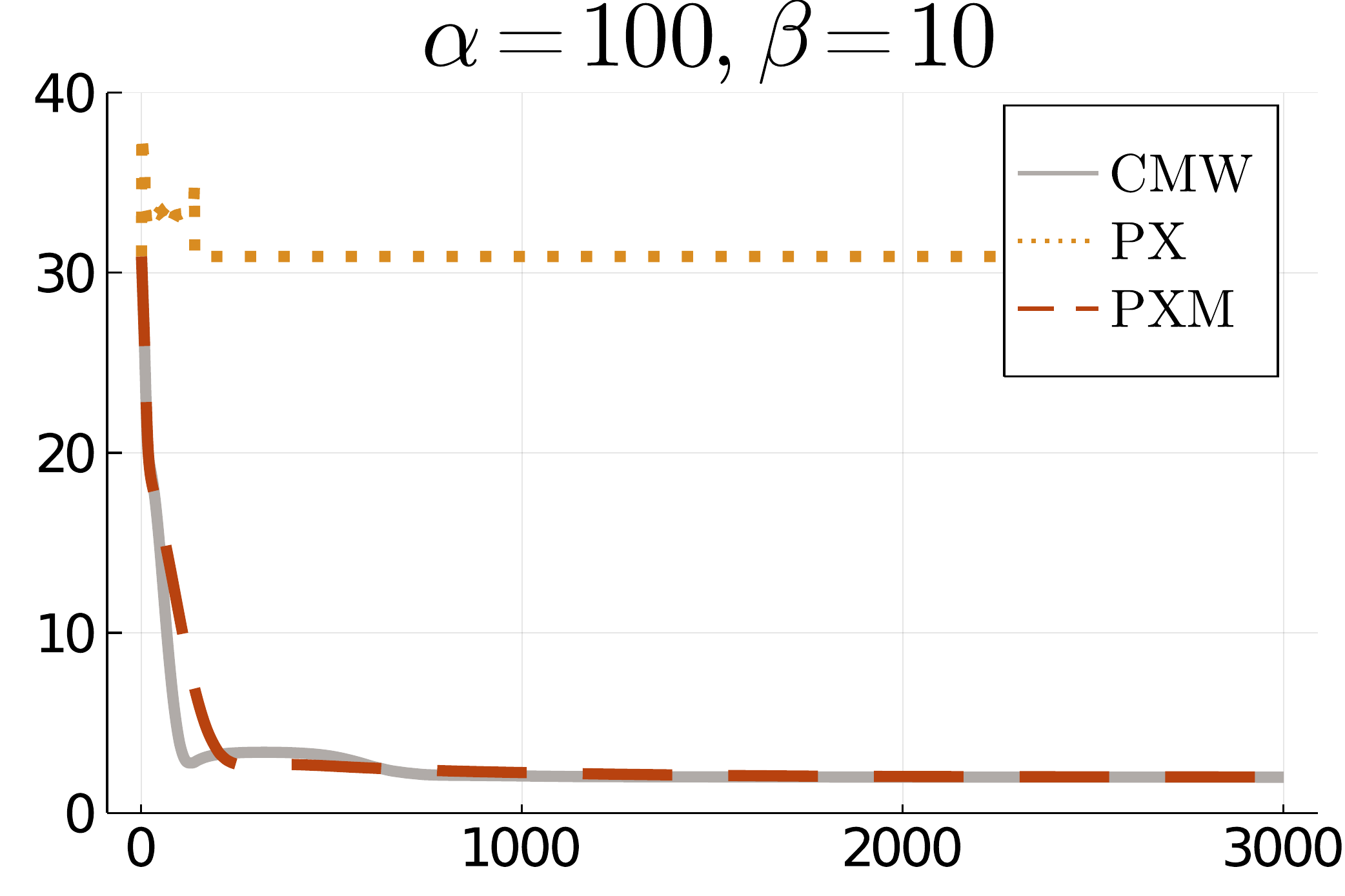}
  \includegraphics[width=0.45\textwidth]{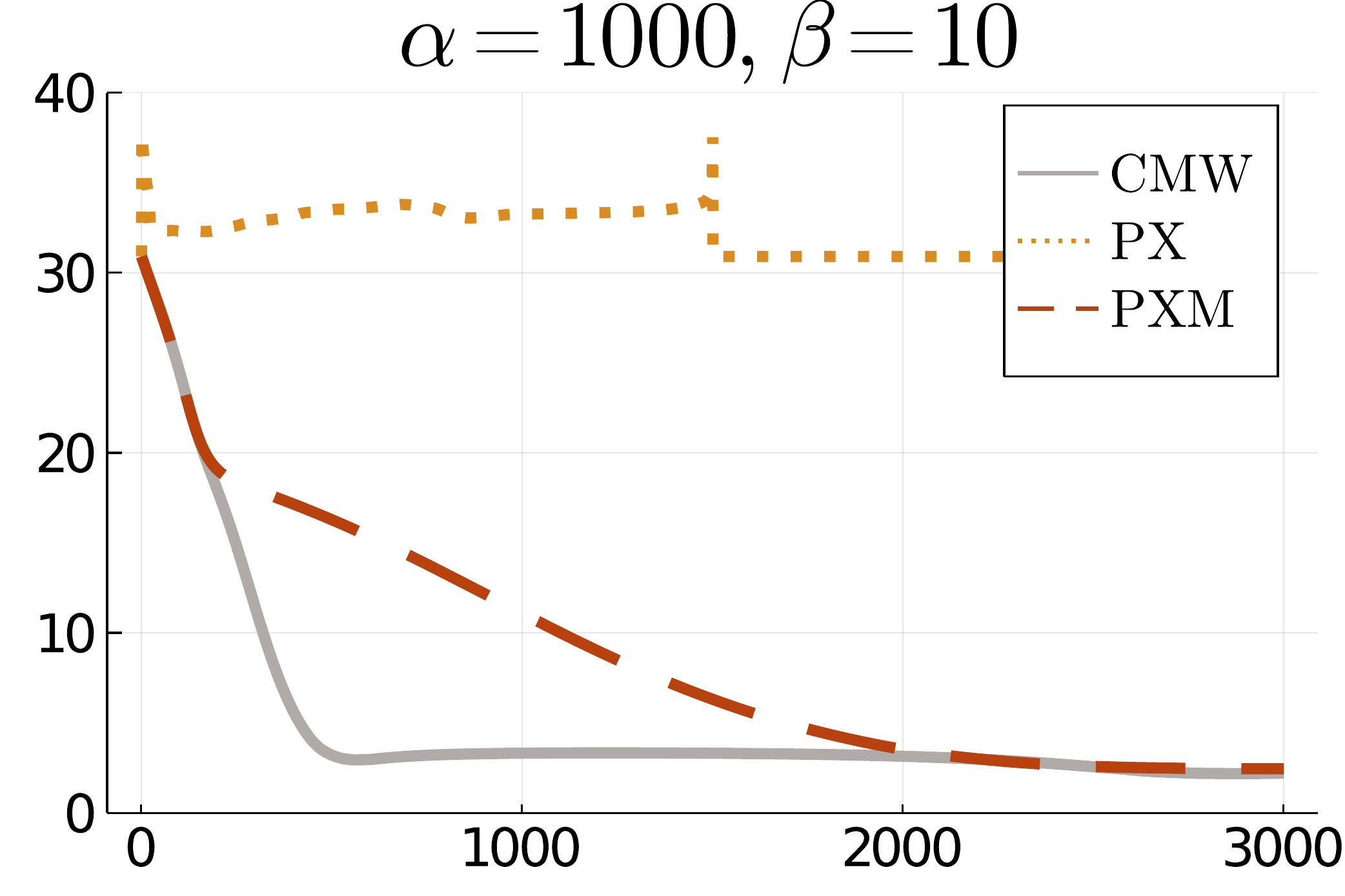}
  \includegraphics[width=0.45\textwidth]{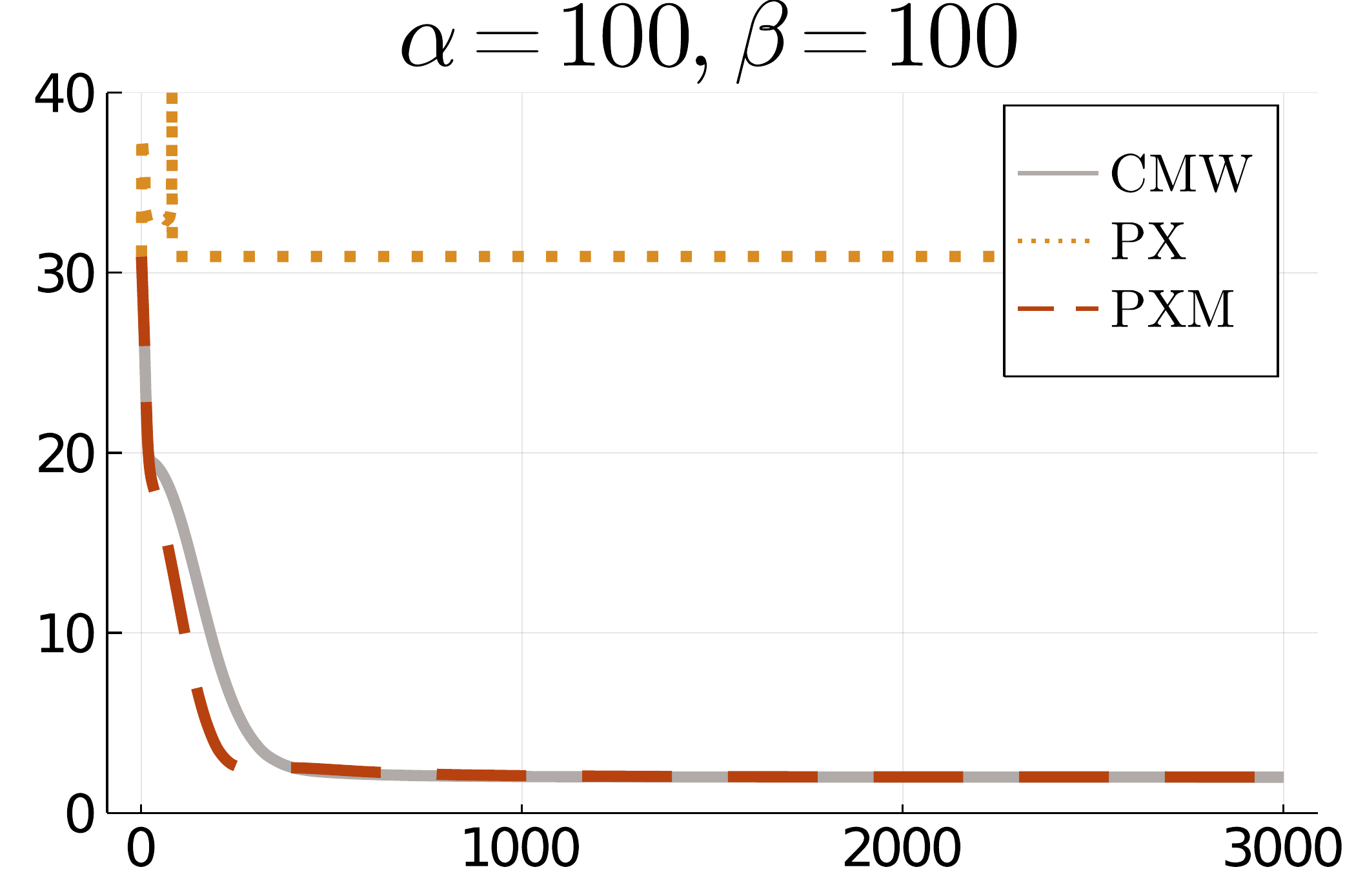}
  \includegraphics[width=0.45\textwidth]{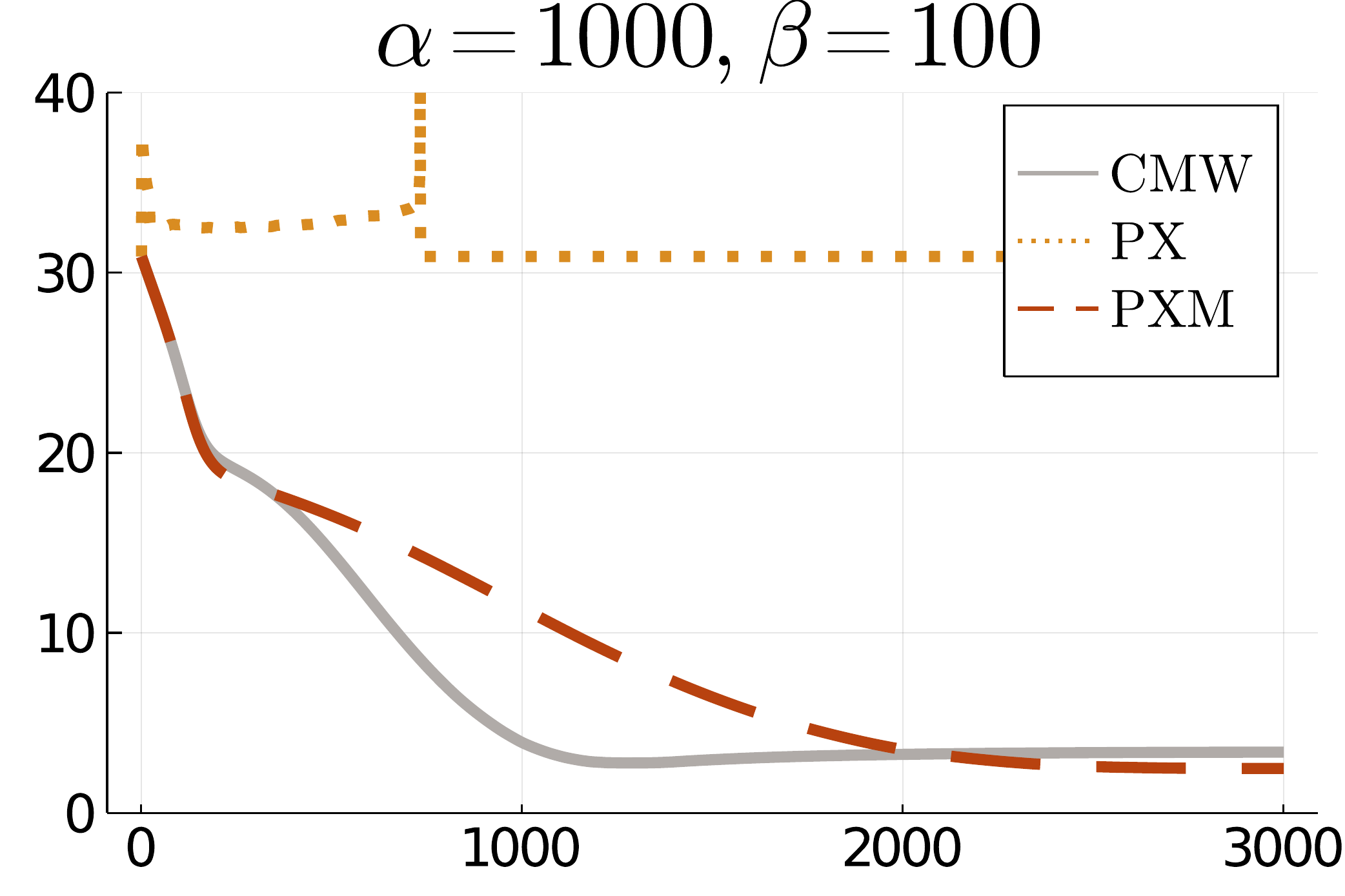}
  \includegraphics[width=0.45\textwidth]{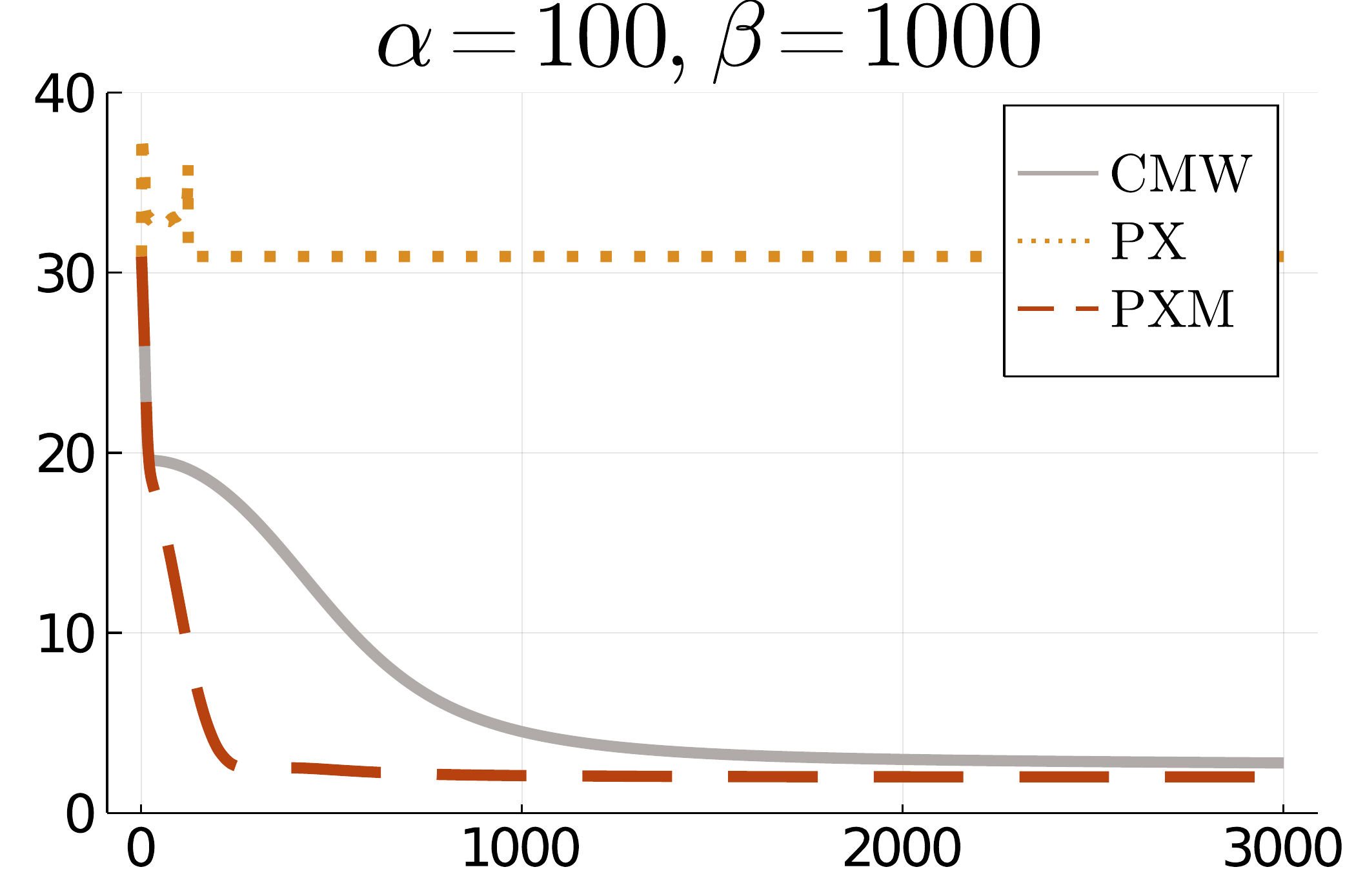}
  \includegraphics[width=0.45\textwidth]{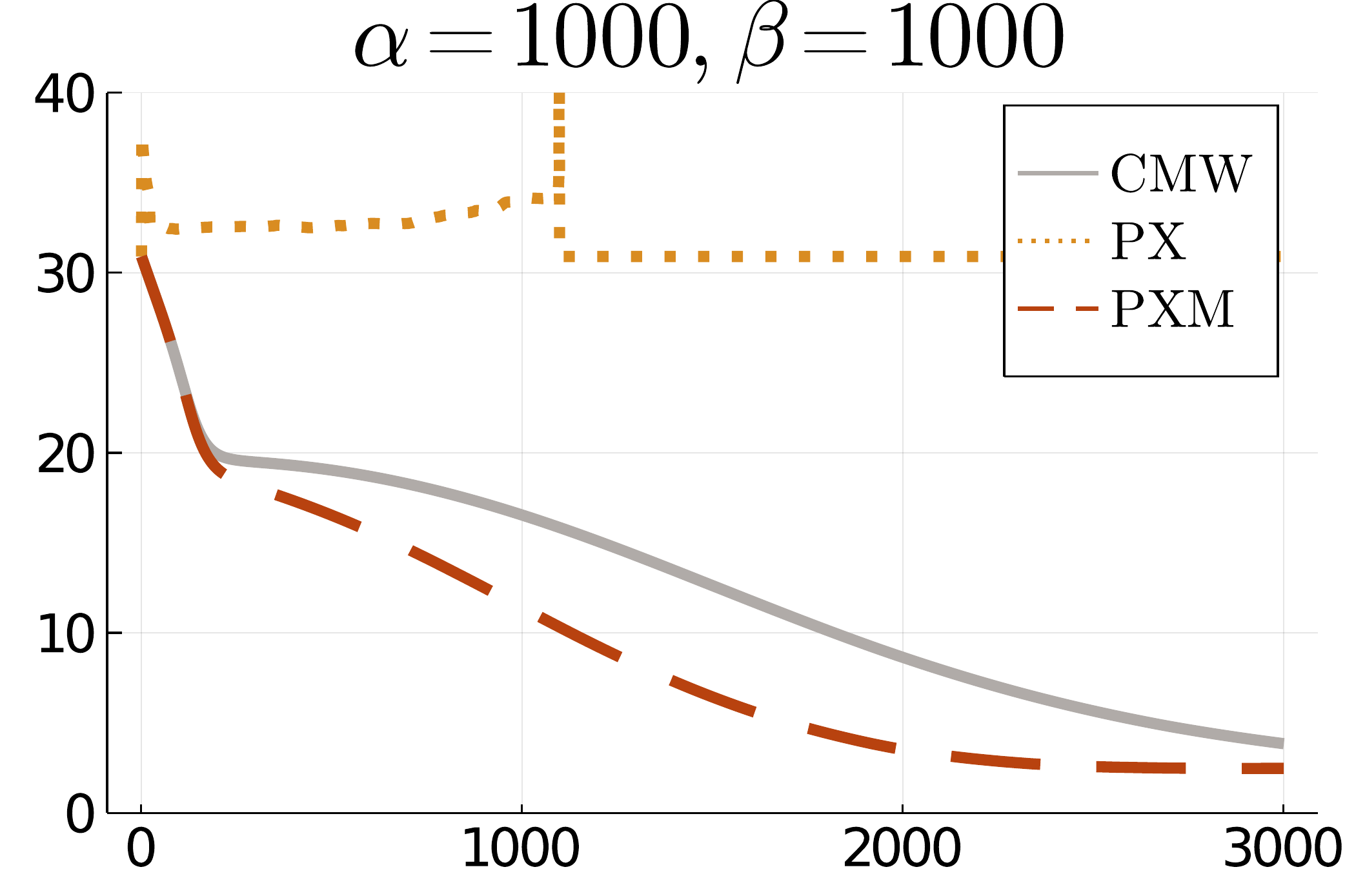}
  \caption{\label{ap:fig:iterplots} We plot the objective value in Equation~\ref{ap:eqn:regression-objective} (after normalization of $x$) compared to outer iterations. In the first panel, PXM diverges and produces NAN values, which is why the plot is incomplete}
\end{figure}
Of course, to compare apples to apples we need to account for the complexity of solving the matrix inverse in CMW.
To this end, we show in Figure~\ref{ap:fig:gradplots} the objectuve value compared to the number of gradient computations and Hessian-vector products.
Therefore, each outer iteration of the extragradient and extramirror methods amounts to an $x$-axis value of $4$, while an outer iteration of CMW, where the conjugate gradient solver requires $k$ steps, amounts to an $x$-axis value of $4 + 2k$.

\begin{figure}
  \centering
  \includegraphics[width=0.45\textwidth]{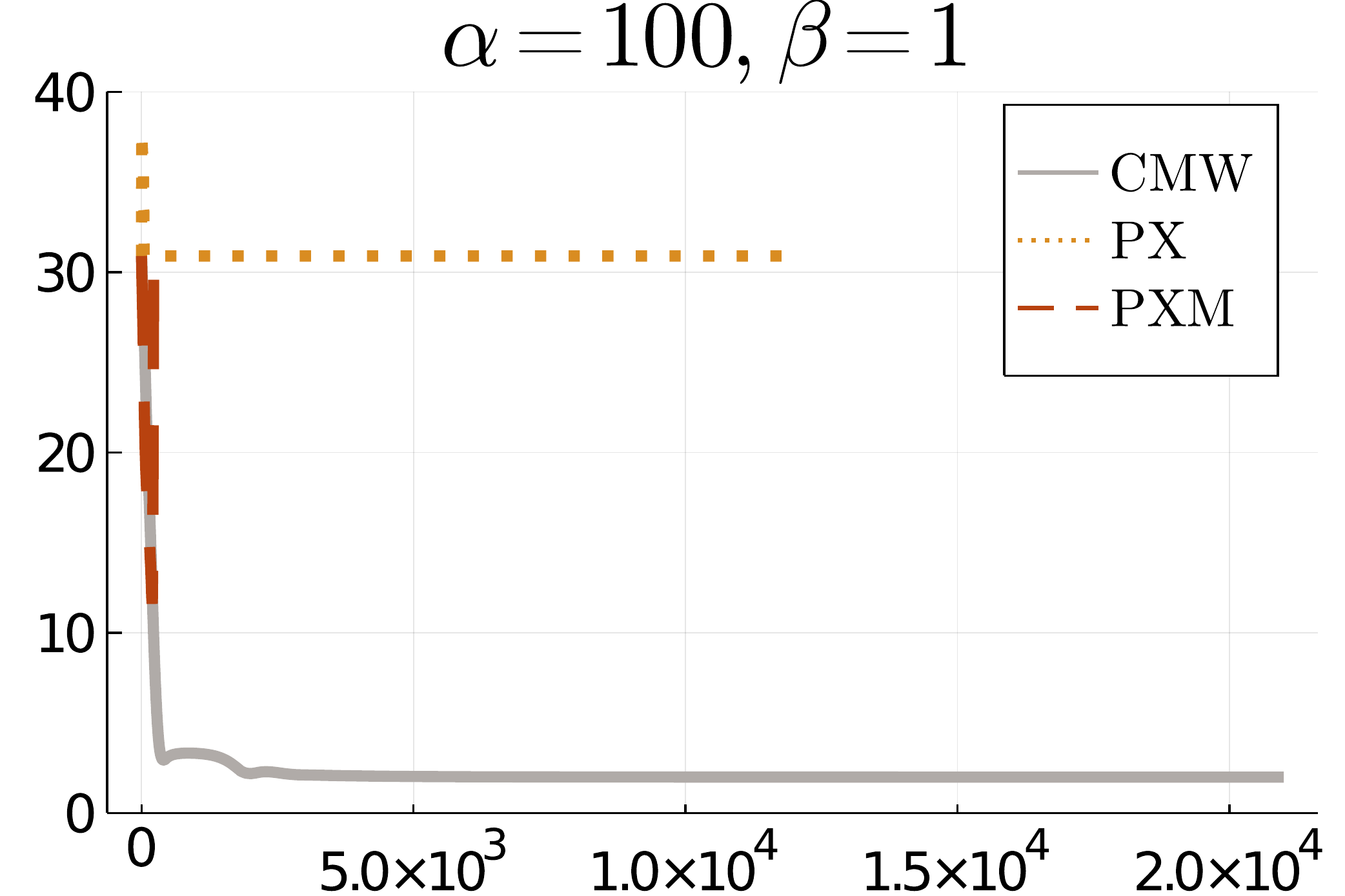}
  \includegraphics[width=0.45\textwidth]{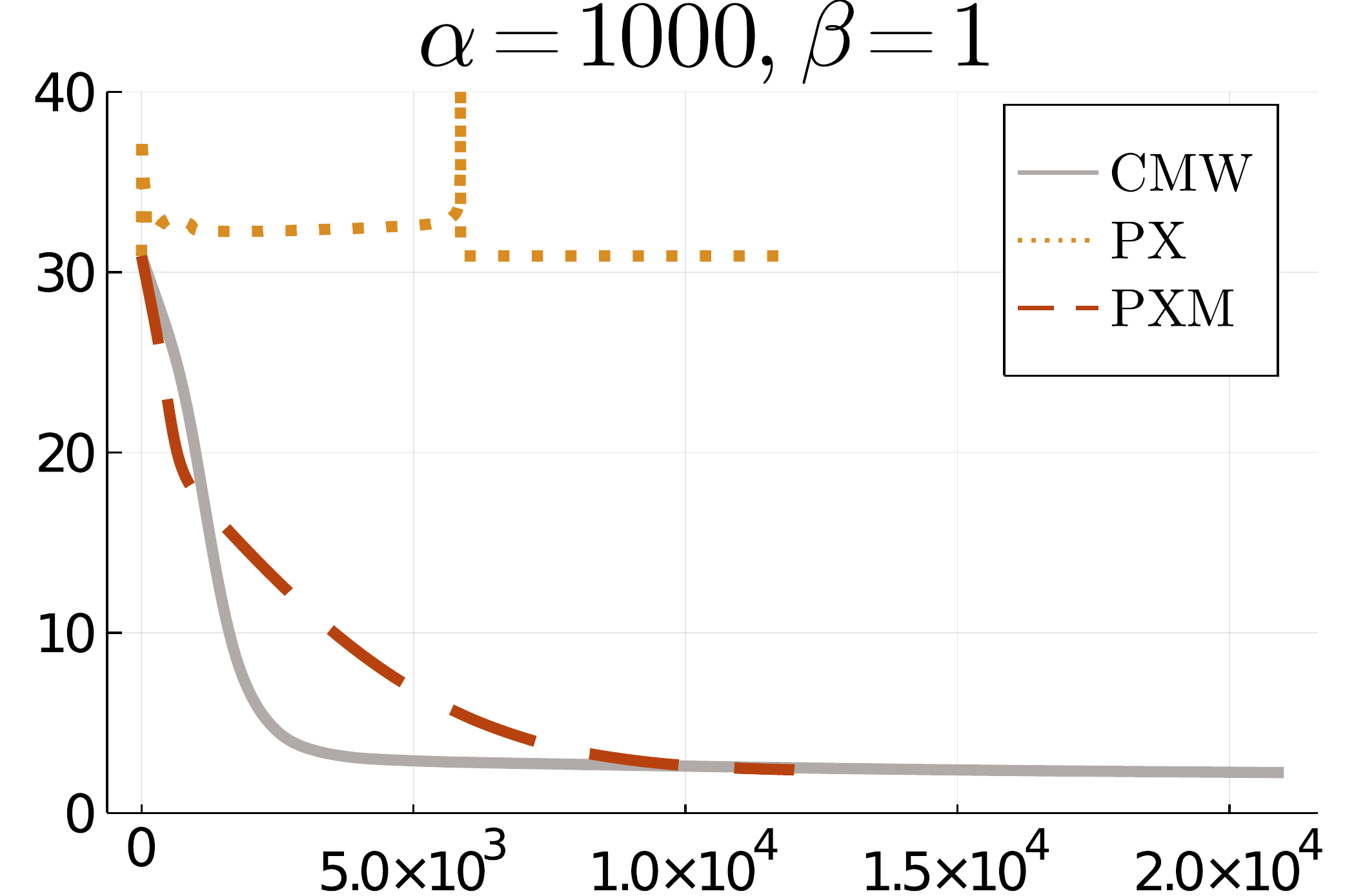}
  \includegraphics[width=0.45\textwidth]{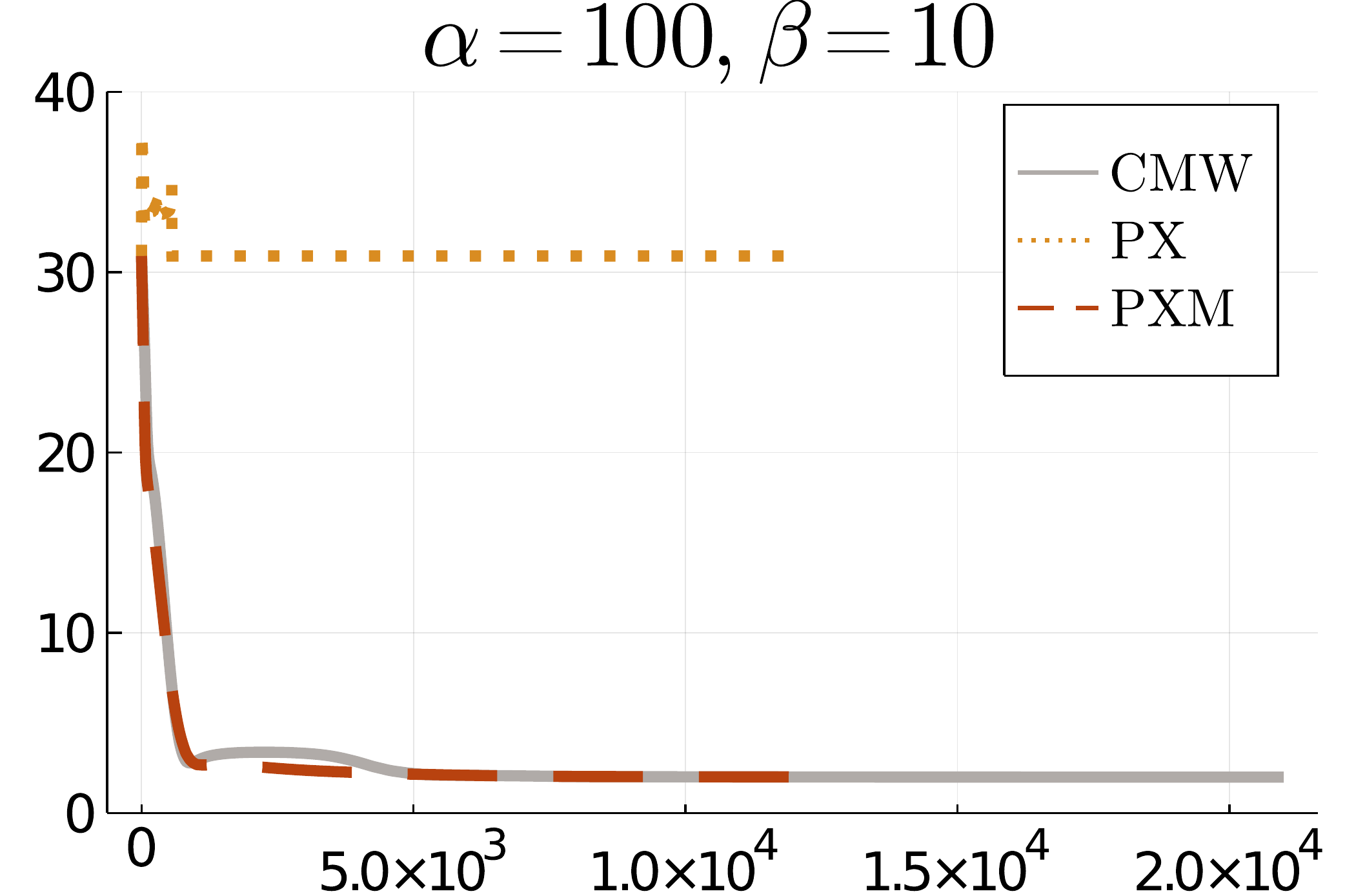}
  \includegraphics[width=0.45\textwidth]{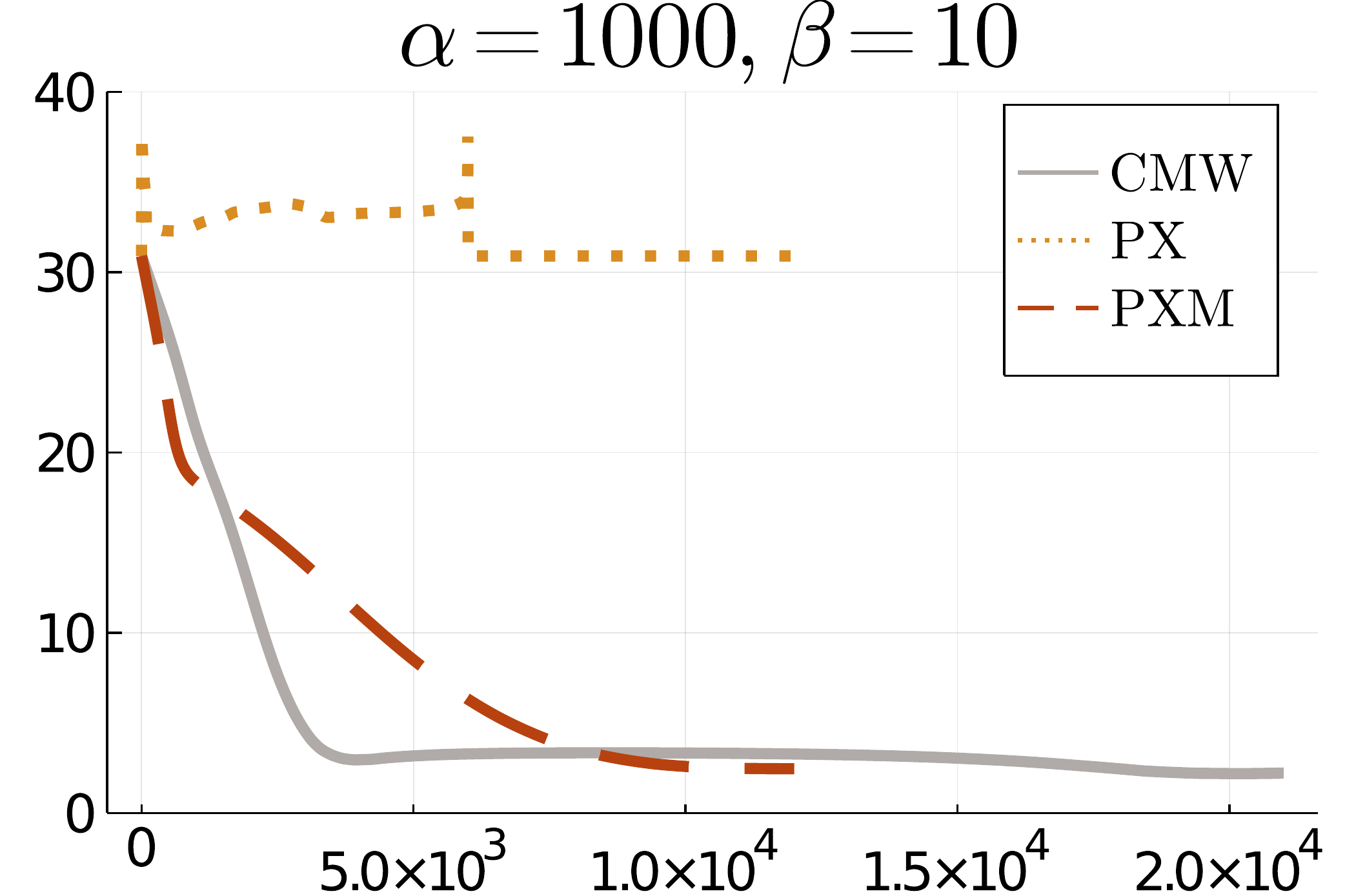}
  \includegraphics[width=0.45\textwidth]{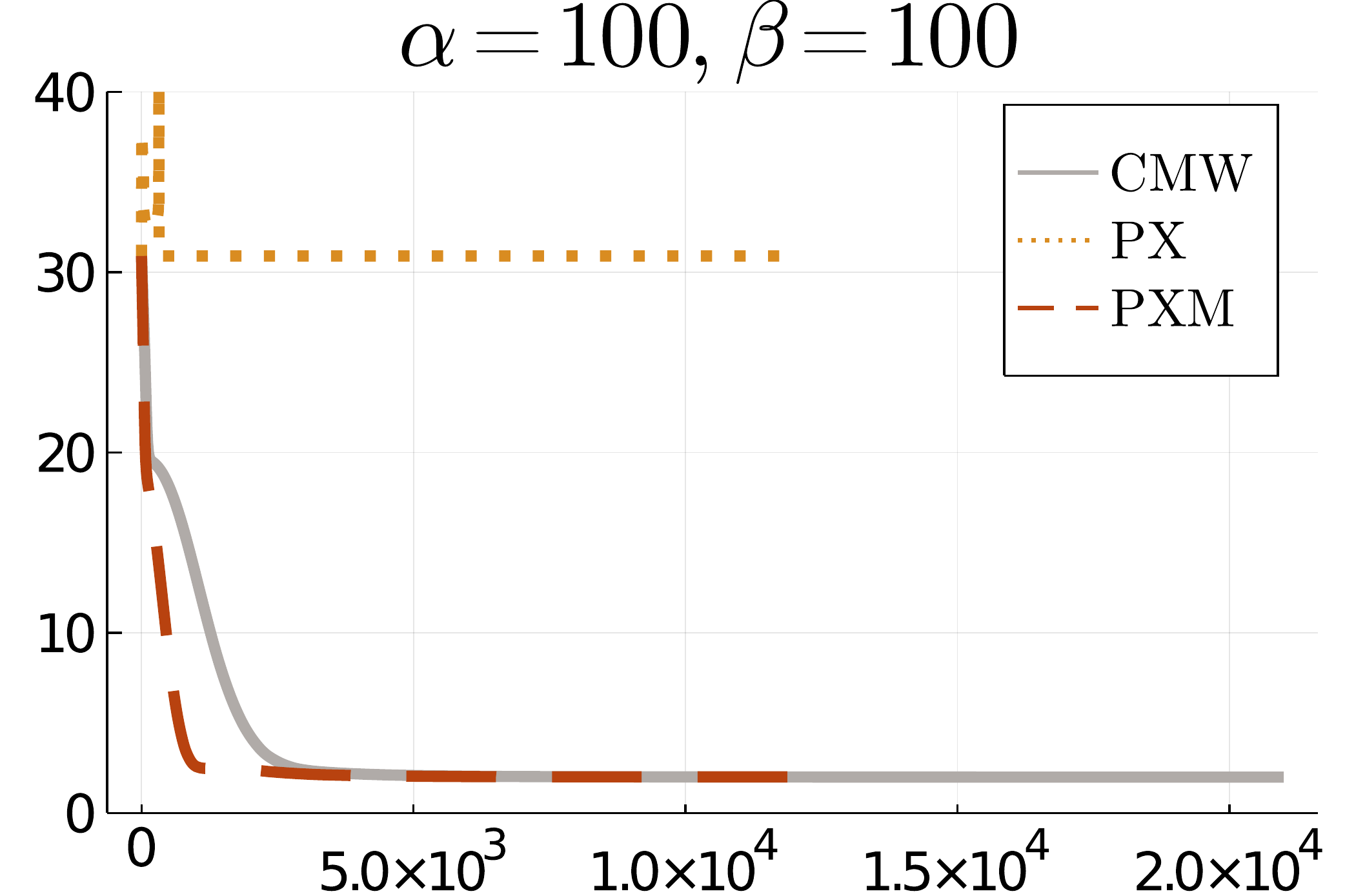}
  \includegraphics[width=0.45\textwidth]{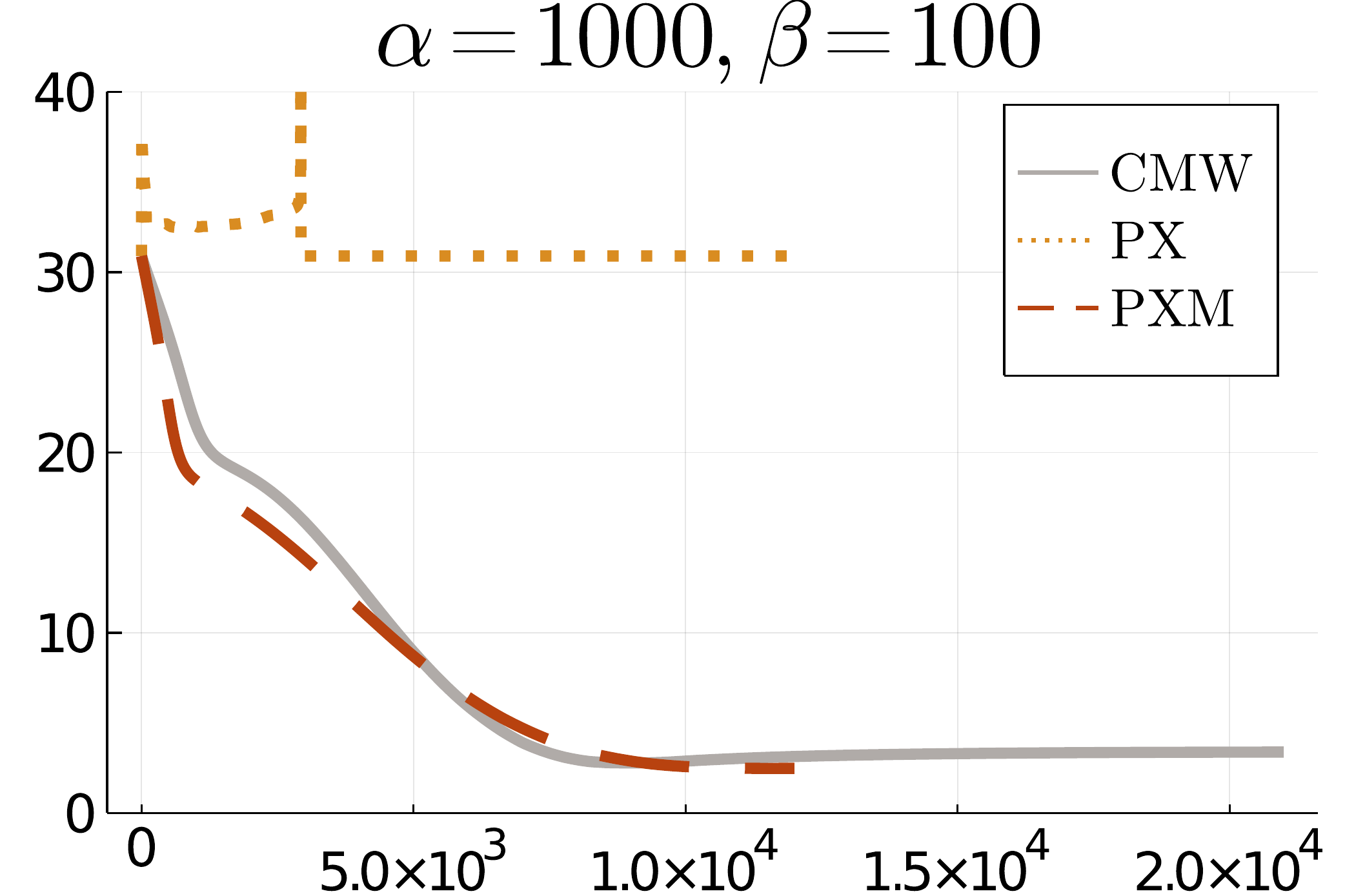}
  \includegraphics[width=0.45\textwidth]{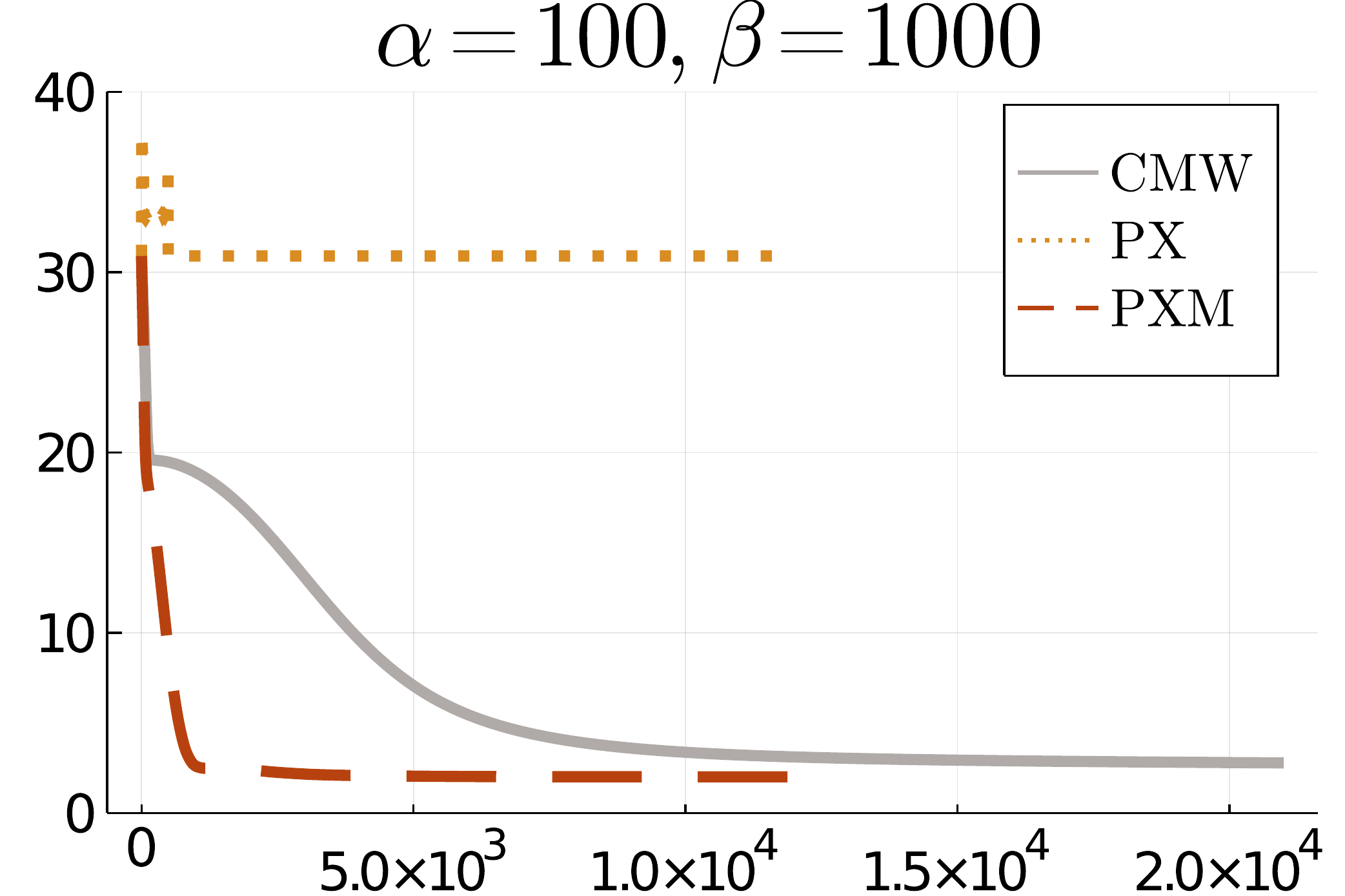}
  \includegraphics[width=0.45\textwidth]{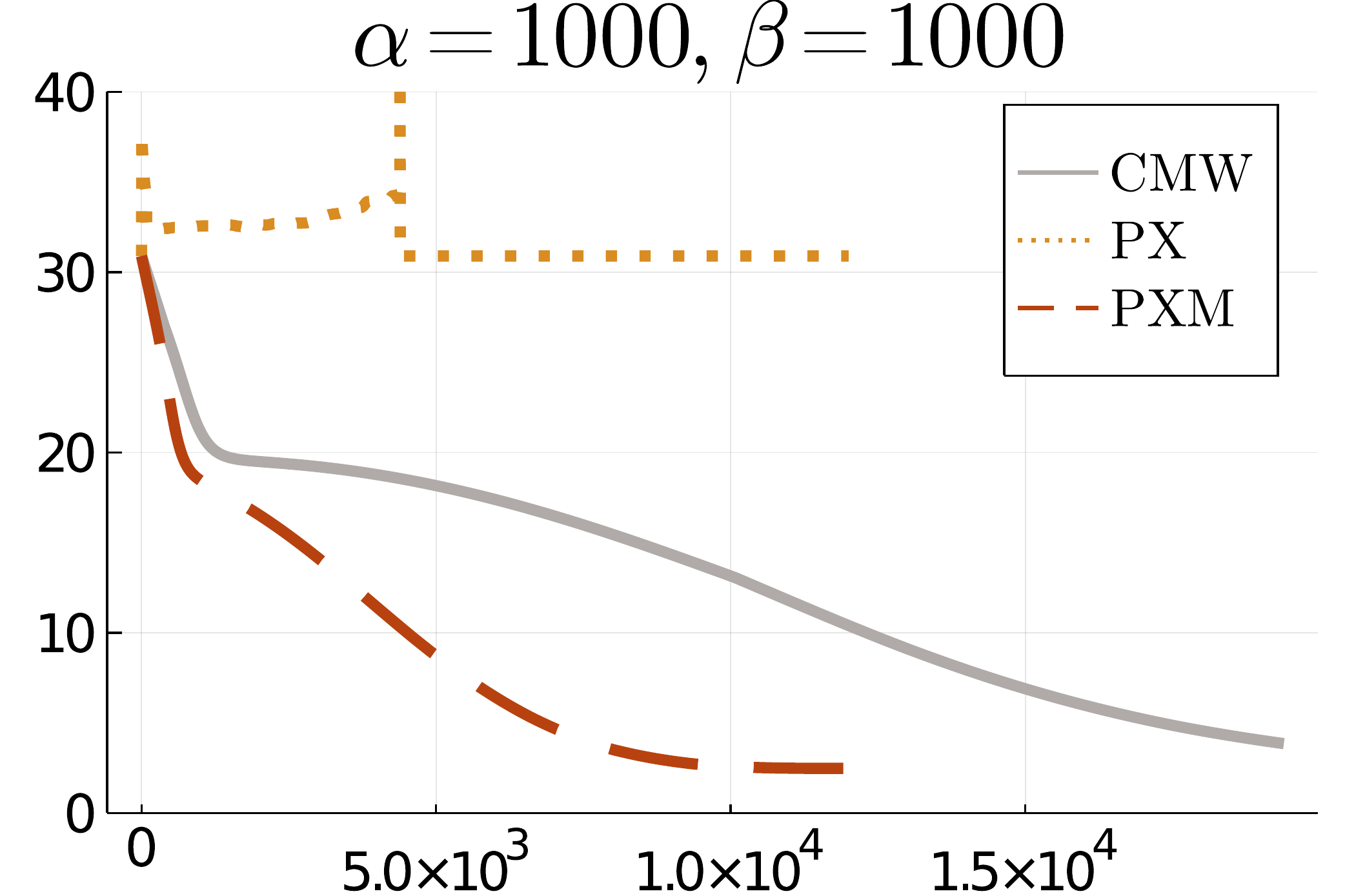}
  \caption{\label{ap:fig:gradplots} We plot the objective value in Equation~\ref{ap:eqn:regression-objective} (after normalization of $x$) compared to the number of gradient computations and Hessian-vector products, accounting for the inner loop of CMW.}
\end{figure}

Similar to \citep{schafer2019competitive}, we observe that even when fairly accounting for the complexity of the matrix inverse, CMW is competitive with PXM, beating it for larger step sizes, while being a bit slower for smaller step sizes.
Importantly, it is significantly more robust and converges even in settings where the competing methods diverge.

\end{document}